\documentclass{article}

\usepackage{fullpage}
\usepackage{amsmath,amsfonts,amssymb,amsthm,enumerate}
\usepackage[hidelinks]{hyperref}
\usepackage{cleveref,thmtools,thm-restate,color}

\usepackage[most]{tcolorbox}
\usepackage{xcolor}
\usepackage{orcidlink}

\newtcolorbox{aiusagebox}{
  colback=blue!3,
  colframe=blue!70!black,
  boxrule=1.4pt,
  arc=3mm,
  left=3mm,
  right=3mm,
  top=2mm,
  bottom=2mm
}

\newtheorem{theorem}{Theorem}[section]
\newtheorem{lemma}[theorem]{Lemma}
\newtheorem{corollary}[theorem]{Corollary}
\theoremstyle{definition}
\newtheorem{definition}[theorem]{Definition}
\theoremstyle{remark}

\DeclareMathOperator{\Sym}{Sym}
\DeclareMathOperator{\id}{id}
\DeclareMathOperator{\im}{im}

\providecommand{\oldbinary}{Theorem 1.3}
\providecommand{\oldunary}{Theorem 1.4}
\providecommand{\improvedunary}{Theorem 1.13}

\providecommand{\restfirst}[1]{|_{{#1}\ast}}
\providecommand{\restsecond}[1]{|_{\ast{#1}}}

\title{Triviality of promise polymorphisms}
\author{Yuval Filmus \orcidlink{0000-0002-1739-0872} \\
\small Taub Faculty of Computer Science and Faculty of Mathematics \\
\small Technion --- Israel Institute of Technology, Haifa, Israel \\
\small \texttt{yuvalfi@cs.technion.ac.il}}

\begin{document}

\maketitle

\begin{abstract}
Given two $m$-ary predicates $P,Q$, an $n$-ary polymorphism is a tuple $(f_1,\dots,f_m)$ of functions such that $x^{(1)},\dots,x^{(n)} \in P$ implies $(f_1(y_1),\dots,f_m(y_m)) \in Q$, where $y_i = (x^{(1)}_i,\dots,x^{(m)}_i)$. This generalizes the usual definition in universal algebra, in which $P = Q$ and $f_1 = \cdots = f_m$.

In earlier work, we studied when all polymorphisms of a single predicate are ``trivial'': either all depend on a single coordinate (common to all of them), or they constitute a ``certificate'' for the predicate. We showed that it suffices to check this condition for $2$-ary polymorphisms, and even for $1$-ary polymorphisms, modulo an explicit list of obstructions.

In this paper we generalize the first result to the $P,Q$ setting, for a relaxed notion of certificate. We also generalize the second result in the promise setting, in which $P,Q$ range over the same alphabets and $P \subseteq Q$.
\end{abstract}

\section{Introduction} \label{sec:introduction}

Which predicates support a nontrivial polymorphism? In the simplest setting, a predicate is a subset $P \subseteq \Sigma^m$, a polymorphism $f\colon \Sigma^n \to \Sigma$ is a function satisfying the condition
\[
 x^{(1)},\dots,x^{(n)} \in P \to (f(x^{(1)}_1,\dots,x^{(n)}_1), \dots, f(x^{(1)}_m,\dots,x^{(n)}_m)) \in P,
\]
and a polymorphism is trivial if it is constant or depends on a single coordinate. Post~\cite{Post20,Post42} classified all possible polymorphisms of predicates over $\{0,1\}$, and his classification answers the question completely when $\Sigma = \{0,1\}$.

In previous work~\cite{Filmus26}, we studied this question in the multi-sorted setting. In this setting, a predicate is a subset $P \subseteq \Sigma_1 \times \cdots \times \Sigma_m$, and a polymorphism is an $m$-tuple of functions $(f_1,\dots,f_m)$, where $f_i\colon \Sigma_i^n \to \Sigma_i$, satisfying the condition
\[
 x^{(1)},\dots,x^{(n)} \in P \to (f_1(x^{(1)}_1,\dots,x^{(n)}_1), \dots, f_m(x^{(1)}_m,\dots,x^{(n)}_m)) \in P.
\]

The work~\cite{Filmus26} proposes the following notion of triviality (which we present in a simplified form): a polymorphism $(f_1,\dots,f_m)$ is trivial if either of the following cases holds:
\begin{description}
    \item[Dictator type] There exists $s \in [n]$ such that $f_i(x)$ depends only on $x_s$ for all $i$.
    \item[Certificate type] There exist a subset $I \subseteq [m]$ and symbols $\sigma_i \in \Sigma_i$ for all $i \in I$ such that (i) $f_i \equiv \sigma_i$ for all $i \in I$, (ii) $x \in P$ whenever $x_i = \sigma_i$ for all $i \in I$.
\end{description}
In words, either $(f_1,\dots,f_m)$ depends on a single ``row'' (viewing the input as an $n \times m$ matrix), or the constant $f_i$ force the output to belong to~$P$. This notion of triviality is inspired by the work of Mossel~\cite{Mossel09,Mossel12} on Arrow's theorem without the assumption of unanimity.

Let us say that a predicate $P$ is \emph{certificate-trivial for $n$} if all $n$-ary polymorphisms of $P$ are either of dictator type or of certificate type. The paper~\cite{Filmus26} has two main results. The first result~\cite[\oldbinary]{Filmus26} states that $P$ is certificate-trivial for all $n$ iff it is certificate-trivial for $n=2$. The second main result~\cite[\oldunary]{Filmus26} improves this to $n=1$, with an explicit list of obstructions.

The follow-up work~\cite{Filmus26b} whittles down the list of obstructions under a modified definition of triviality, in which we require $f_1 = \cdots = f_m$ in the case of dictator type (this only makes sense when $\Sigma_1 = \cdots = \Sigma_m$)~\cite[\improvedunary]{Filmus26b}.

\medskip

In this paper, we generalize the results of~\cite{Filmus26,Filmus26b} in two directions: we consider a more relaxed notion of triviality, and we allow the ``input'' predicate (the one satisfied by $x^{(1)},\dots,x^{(n)}$) to differ from the ``output'' predicate (the one satisfied by the image of the polymorphism). Our results hold even when the alphabets are infinite.

\begin{definition}[Relation pair]
Let $P \subseteq \Sigma_1 \times \cdots \times \Sigma_m$ and let $Q \subseteq \Delta_1 \times \cdots \times \Delta_m$. An $n$-ary polymorphism of $(P,Q)$ is an $m$-tuple of functions $(f_1,\dots,f_m)$, where $f_i\colon \Sigma_i^n \to \Delta_i$, such that
\[
 x^{(1)},\dots,x^{(n)} \in P \to (f_1(x^{(1)}_1,\dots,x^{(n)}_1), \dots, f_m(x^{(1)}_m,\dots,x^{(n)}_m)) \in Q.
\]
\end{definition}

\begin{definition}[Box-triviality]
Let $\Phi \subseteq \Delta_1^{\Sigma_1} \times \cdots \times \Delta_m^{\Sigma_m}$. 

An $m$-tuple of functions $(f_1,\dots,f_m)$, where $f_i\colon \Sigma_i^n \to \Delta_i$, is $\Phi$-box-trivial if either of the following cases holds:
\begin{description}
    \item[Dictator type] There exist $s \in [n]$ and $(\phi_1,\dots,\phi_m) \in \Phi$ such that $f_i(x) = \phi_i(x_s)$ for all $i$.
    \item[Box type] $\im f_1 \times \cdots \times \im f_m \subseteq Q$.
\end{description}

A relation pair $(P,Q)$, where $P \subseteq \Sigma_1 \times \cdots \times \Sigma_m$ and $Q \subseteq \Delta_1 \times \cdots \times \Delta_m$, is $\Phi$-box-trivial for $n$, where $n \ge 1$, if every $n$-ary polymorphism of $(P,Q)$ is $\Phi$-box-trivial.
\end{definition}

The definition of dictator type is the same as in~\cite{Filmus26,Filmus26b}; the difference is that certificate type is replaced by box type, and that we allow $P$ to be different from $Q$.

Box type is more general than certificate type, as demonstrated by
\[
 P = Q = \{\text{all permutations of } (0,0,0), (2,2,2), (0,1,1), (0,1,2)\},
\]
which is $\{(\id,\id,\id)\}$-box-trivial for all $n$ but not $\{(\id,\id,\id)\}$-certificate-trivial even for $n = 1$.

Our first main result generalizes \cite[\oldbinary]{Filmus26}.

\begin{restatable}{theorem}{mainbinary} \label{thm:main-binary}
Let $P \subseteq \Sigma_1 \times \cdots \times \Sigma_m$, $Q \subseteq \Delta_1 \times \cdots \times \Delta_m$, and $\Phi \subseteq \Delta_1^{\Sigma_1} \times \cdots \times \Delta_m^{\Sigma_m}$.

Suppose that $|\Sigma_i| \ge 2$ for all $i$, and that $P$ has full projections: $P|_i = \Sigma_i$.

The relation pair $(P,Q)$ is $\Phi$-box-trivial for all $n$ iff it is $\Phi$-box-trivial for $n=2$.
\end{restatable}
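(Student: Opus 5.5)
The plan is to argue by induction on $n$, after first handling $n=1$ separately.

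\emph{Reduction from $n=1$ to $n=2$.} Given a unary polymorphism $(f_1,\dots,f_m)$ of $(P,Q)$, define the binary polymorphism $f'_i(x_1,x_2)=f_i(x_1)$; it is a polymorphism since the second column is ignored. Its images coincide with those of $f$, so box type for $f'$ gives box type for $f$. If $f'$ is of dictator type on coordinate $1$, with $(\phi_1,\dots,\phi_m)\in\Phi$, then $f_i=\phi_i$ and $f$ is of dictator type. If $f'$ is of dictator type on coordinate $2$, then $f_i(x_1)=\phi_i(x_2)$ for all $x_1,x_2$. Hence each $f_i$ and each $\phi_i$ is the same constant, so $f_i=\phi_i$ and again $f$ is of dictator type.

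\emph{Inductive step.} Assume $(P,Q)$ is $\Phi$-box-trivial for all arities $2\le k\le n$, and let $f=(f_1,\dots,f_m)$ be an $(n+1)$-ary polymorphism. I would derive lower-arity polymorphisms by merging coordinates. For a surjection $\pi\colon[n+1]\to[k]$, the functions $f^\pi_i(y)=f_i(y_{\pi(1)},\dots,y_{\pi(n+1)})$ form a $k$-ary polymorphism, because repeating columns of an input matrix keeps every column in $P$. Merging into blocks $B=\pi^{-1}(1)$ and $\bar B$ gives binary polymorphisms $f^{B}$, one for each nonempty proper subset $B$; merging a single pair $\{s,t\}$ gives $n$-ary polymorphisms. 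All of these are trivial by hypothesis.

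\emph{Case 1: some merged polymorphism is of dictator type on a nonconstant coordinate.} For instance, suppose $f^{B}$ is of dictator type on the block $B$ with witness $\phi$. I would then show that $f$ depends only on coordinates in $B$, and recurse by intersecting blocks. Concretely, compare $f^{B}$ with $f^{B'}$ for a second split $B'$, using $|\Sigma_i|\ge2$ to choose inputs that separate blocks. The aim is to pin down a single coordinate $s$ such that $f^{B}$ is of dictator type on the side containing $s$ for every $B$. From this I want to conclude $f_i(x)=\phi_i(x_s)$. The argument is to restrict to inputs constant on the complement of $s$ and then move one coordinate at a time, using the $n$-ary pair merges, which are dictators by induction, to propagate equality. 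Full projections guarantee that every symbol in $\Sigma_i$ actually appears in some input column, so the equalities cover all of $\Sigma_i^{n+1}$.

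\emph{Case 2: every merged polymorphism is of box type, but $f$ is not.} This is the main obstacle. If $f$ is not of box type, there are points $a^{(1)},\dots,a^{(m)}$ with $a^{(i)}\in\Sigma_i^{n+1}$ and $(f_1(a^{(1)}),\dots,f_m(a^{(m)}))\notin Q$. The first thing I would try is a counting or fixing argument that shows these witnesses can be taken to use at most two distinct ``patterns'' of coordinates. Then all of them lie in the image of a single merged binary $f^{B}$, contradicting its box type. Failing that, I would fix the last column to a tuple $z\in P$, which exists for every prescribed last entry by full projections, obtaining an $n$-ary polymorphism $f|_z$. This restriction is trivial by induction, and I would argue that dictator type for $f|_z$ must be consistent across all $z$, while box type for all $z$ propagates to box type for $f$. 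The delicate part is mixed cases, where some restrictions are dictators and others are boxes. There I expect to need the binary hypothesis directly, applied to $f^{\{n+1\}}$, to rule out the mixture.
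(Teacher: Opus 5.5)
Your reduction of $n=1$ to $n=2$ is fine; it is the paper's downward-closure lemma. The inductive step, however, has a genuine gap: it never gives a mechanism that transfers the binary hypothesis to an $(n+1)$-ary polymorphism.

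Minors obtained by identifying variables cannot do this on their own. Every minor of arity at most $n$ evaluates $f_i$ only on tuples with a repeated entry. When $|\Sigma_i|\ge n+1$, the values of $f_i$ on tuples with $n+1$ distinct symbols are invisible to all of them. So your Case~1 claim that the pair merges, together with full projections, ``cover all of $\Sigma_i^{n+1}$'' is false; full projections only say $P|_i=\Sigma_i$. In Case~2, the non-box witnesses $f_i(a^{(i)})$ may all sit at such tuples, so there is no reason they can be compressed into two patterns.

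Your fallback, restricting the last coordinate to $z\in P$, is the right starting point, but the claims you make next do not hold as stated. ``Box type for all $z$ propagates to box type for $f$'' fails for $f_i(x)=\phi_i(x_{n+1})$ with $(\phi_1,\dots,\phi_m)\in\Phi$ a polymorphism not of box type: every restriction is constant, hence of box type, yet $f$ is not. More fundamentally, box type of the restrictions only controls $\prod_i\im f_i|_{z_i}$ for $z\in P$, while box type of $f$ concerns arbitrary combinations of slices. Nor can the binary hypothesis applied to the minor $f^{\{n+1\}}$ settle the mixed cases. If every restriction is a dictator $\phi_i(x_{s(i,z_i)})$ with inconsistent $s$, this minor is the honest dictator $(u,v)\mapsto\phi_i(u)$. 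If $f_i|_{z_i}$ depends on two coordinates, its diagonal may be constant and lose the witness.

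The missing idea is to build a binary polymorphism $g$ that is \emph{not} a minor of $f$ but encodes the restrictions. Fix $\omega\notin Q$ with $\omega_i\in\im f_i|_{\alpha_i}$, and define $g_i(\cdot,\sigma)$ as follows. It is the constant value of $f_i|_\sigma$ if that restriction is constant. It is $\phi$ if $f_i|_\sigma(x)=\phi(x_s)$ with $\phi$ non-constant. Otherwise it is any non-constant function with image inside $\im f_i|_\sigma$, containing $\omega_i$ when $\sigma=\alpha_i$; this is possible since $|\Sigma_i|\ge 2$. Triviality of each restriction makes $g$ a polymorphism, and $\omega$ rules out box type, so $g$ is a dictator.

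If $g$ is a dictator on the second coordinate, every $f_i|_\sigma$ is constant and $f_i(x)=\phi_i(x_{n+1})$. If it is a dictator on the first coordinate, full projections and $\omega$ force $f_i|_\sigma(x)=\phi_i(x_{s(i,\sigma)})$ for a single $(\phi_1,\dots,\phi_m)$. The coordinates $s(i,\sigma)$, for non-constant $\phi_i$, are then shown to agree by a connectivity argument. Take the graph on pairs $(i,\sigma)$ whose edges are induced by tuples of $P$. If it were disconnected, the binary map returning $\phi_i(\sigma)$ or $\phi_i(\tau)$ according to the component containing $(i,\sigma)$ would be a polymorphism that is neither a dictator nor of box type. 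None of these ingredients is present in your plan.
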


The proof is a straightforward generalization of the proof of \cite[\oldbinary]{Filmus26}. Moreover, we can derive \cite[\oldbinary]{Filmus26} (in the setting of relation pairs) from \Cref{thm:main-binary}, as we show in \Cref{sec:certificate-version}.

Our second main result generalizes \cite[\improvedunary]{Filmus26b} to the setting of promise polymorphisms.

\begin{restatable}{theorem}{mainunary} \label{thm:main-unary}
Let $P \subseteq Q \subsetneq \Sigma_1 \times \cdots \times \Sigma_m$, and let $\Phi \subseteq \Sym(\Sigma_1) \times \cdots \times \Sym(\Sigma_m)$, where $\Sym(\Sigma)$ consists of all permutations of $\Sigma$.

Suppose that $|\Sigma_i| \ge 2$ for all $i$, that $P$ has full projections, and that $\Phi$ is \emph{synchronous}: if $\phi,\psi \in \Phi$ and $\phi_i = \psi_i$ then $\phi = \psi$.

The relation pair $(P,Q)$ is $\Phi$-box-trivial for all $n$ iff it is $\Phi$-box-trivial for $n=1$ and neither of the following hold:
\begin{description}
\item[Unit witness obstruction] There exist $i \in [m]$ and $b \in \Sigma_i$ such that $|\Sigma_i| = 2$ and $y \in Q$ whenever $y_i = b$.
\item[Generalized upset obstruction] $|\Sigma_i| = 2$ for all $i$, and there exists $y \in P$ such that for all $x \in Q \setminus \{y\}$, the predicate $Q$ contains all vectors obtained from $x$ by changing coordinates to their value in the complement of $y$.
\end{description}
In particular, if all alphabets have size larger than~$2$, then there are no obstructions.
\end{restatable}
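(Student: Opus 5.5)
The plan has two parts: necessity (the obstructions give nontrivial polymorphisms) and sufficiency (reduce to $n=2$ via \Cref{thm:main-binary}). Since $P$ has full projections and $|\Sigma_i|\ge 2$, \Cref{thm:main-binary} applies. It therefore suffices to show that if $(P,Q)$ is $\Phi$-box-trivial for $n=1$ and neither obstruction holds, then every binary polymorphism $(f_1,\dots,f_m)$ is $\Phi$-box-trivial.

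For necessity, I would exhibit a nontrivial polymorphism for each obstruction.

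\textbf{Unit witness.} Take $f_i = \mathrm{AND}$-like with respect to $b$: with $\Sigma_i=\{a,b\}$, let $f_i(x)=b$ unless all $x_s=a$. Take $f_j(x)=x_1$ for $j\ne i$. If some $x_s$ has $i$-th entry $b$, the output has $i$-th coordinate $b$ and so lies in $Q$. Otherwise all inputs agree at coordinate $i$, so the output is $x^{(1)}\in P\subseteq Q$. This polymorphism is not of dictator type, since $f_i$ depends on every coordinate. It is not of box type either: $Q\neq\Sigma$ and full projections make the image product everything.

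\textbf{Generalized upset.} Identify each $\Sigma_i$ with $\{0,1\}$ so that $y=\vec 0$. Take $f_i$ to be an $\mathrm{OR}$ of two coordinates. The output is $x^{(1)}$ with some coordinates raised to $1$, which lies in $Q$ by the upset property. The only exception is $x^{(1)}=y$, and in that case the output is $x^{(2)}$ raised, which again lies in $Q$.

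For sufficiency, fix a binary polymorphism $f$. For each $i$ and $a\in\Sigma_i$ consider the restrictions $f_i(a,\cdot)$ and $f_i(\cdot,a)$, written $f_i\restfirst{a}$ and $f_i\restsecond{a}$. Fixing $x^{(1)}=z\in P$ turns $(f_i\restfirst{z_i})_i$ into a unary polymorphism of $(P,Q)$. Hence, by unary triviality, each such slice is either a $\Phi$-permutation tuple or a box tuple. I would split into cases.
\begin{enumerate}
\item[(a)] Suppose that for some $z\in P$ the slice $(f_i\restfirst{z_i})_i$ is a permutation tuple $\phi\in\Phi$. Since the $\phi_i$ are permutations, the images are full. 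Using synchronicity and full projections, I would propagate this to neighbouring $z'\in P$ sharing a coordinate with $z$: a shared coordinate forces the same $\phi_i$ there, hence the same $\phi$. I would aim to conclude either that $f=\phi(x_2)$, or that the first-coordinate slices are all equal to $\phi$ with the other argument giving a dictator in $x_1$.
\item[(b)] Otherwise every slice in both directions is of box type. Then I would try to show that $\prod_i \im f_i\subseteq Q$ unless one of the obstructions holds.
\end{enumerate}

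The main obstacle is the mixed situation where some slices are permutations and others are box type. Because $\Phi$ consists of permutations and $Q\ne\Sigma$, a box-type slice must have some $\im f_i\restfirst{a}$ that is a proper subset of $\Sigma_i$. Combining this with a permutation slice in the other direction should force $|\Sigma_i|=2$ and make $f_i$ behave like AND/OR. I would then extract either a unit witness (a single symbol $b$ forcing membership in $Q$), or, when this happens in every coordinate, the generalized upset structure with $y$ the point where all slices are nonconstant. Getting the case analysis tight enough to yield exactly these two obstructions is where I expect most of the work to lie. In particular, I expect that showing that $|\Sigma_i|>2$ rules out mixing needs a counting argument on images of permutations restricted to slices.
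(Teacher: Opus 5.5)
Your necessity constructions and the reduction to binary polymorphisms via \Cref{thm:main-binary} are correct and match the paper. However, the sufficiency direction, which is the real content of the theorem, is not proved. The one concrete step you propose in case (a) is false: synchronicity only relates two tuples already known to lie in $\Phi$. A slice $(f_i(z'_i,\cdot))_i$ whose $i$-th component is the permutation $\phi_i$ can still be of box type, because box type only constrains the product of the images. For example, take $P=Q=\{(0,0),(1,0),(1,1)\}\subseteq\{0,1\}^2$ and $\Phi=\{(\id,\id)\}$; all hypotheses hold, and the pair is $\Phi$-box-trivial for $n=1$. Let $f_1(a,b)=a\vee b$ and $f_2(a,b)=b$. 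The slice at $z=(0,0)$ is $(\id,\id)\in\Phi$. The slice at $z'=(1,0)$ consists of the constant-$1$ map and $\id$, and is of box type, even though $z_2=z'_2$. (This pair has a unit witness obstruction, so the theorem is not contradicted, but your propagation step never used the absence of obstructions.) So the mixed situation you postpone is where the whole proof lives. Similarly, if all slices in both directions lie in $\Phi$, then $f$ cannot be a dictator at all, since a dictator has constant slices in one direction. In that subcase the target must be an obstruction or a contradiction. The paper uses synchronicity to show that no two members of $P$ agree on a coordinate, so $P$ is the graph of bijections between the alphabets. This gives a degenerate generalized upset obstruction in the Boolean case and a contradiction otherwise.

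What is missing throughout is a way to turn information about slices into unary polymorphisms, which is the only way the $n=1$ hypothesis can be used; a ``counting argument on images'' does not supply one.

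\emph{All first-argument slices of box type.} This is your (b), and it drives the rest of the proof. The paper fixes a witness $\omega\notin Q$ with $\omega_i\in\im f_i(\alpha_i,\cdot)$. It observes that every unary $g$ with $g_i(\alpha_i)=\omega_i$ and $g_i(\sigma)\in\im f_i(\sigma,\cdot)$ for $\sigma\ne\alpha_i$ is a polymorphism that cannot be of box type, and hence lies in $\Phi$. Synchronicity then forces $f_i(\sigma,\cdot)$ to be constant for $\sigma\ne\alpha_i$. This yields a dictator or a unit witness, with a transposition ruling out $|\Sigma_{i_0}|\ge 3$.

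\emph{Mixed case.} The paper builds an auxiliary binary polymorphism $h$: it replaces permutation slices by $\id$, or takes $h_i(a,b)=f_i(a,f_i(a,b))$ when all alphabets are Boolean. It checks that $h$ is not $\Phi$-box-trivial, and applies the all-box result to the second-argument slices of $h$ to find $y\in P$ with $(h_i(\cdot,y_i))_i\in\Phi$. This yields a closure property of $Q$ around $y$. If some $|\Sigma_{i_0}|\ge 3$, a unary polymorphism collapsing each $\Sigma_i\setminus\{y_i\}$ to a single value is neither in $\Phi$ nor of box type, a contradiction. If all alphabets are Boolean, then for each $x\in Q\setminus\{y\}$ a unary polymorphism with images $\{x_i,\bar y_i\}$ gives the generalized upset property.

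None of these constructions, or any substitute for them, appears in your proposal.
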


When $\Sigma_1 = \cdots = \Sigma_m$, one way to satisfy the requirement of synchronicity is by positing that $\Phi$ consists of tuples of the form $(\phi,\dots,\phi)$.

Let us demystify the generalized upset obstruction. When $\Sigma_i = \{0,1\}$ for all $i$ and $y = (0,\dots,0) \in P$, it states that $Q = \{(0,\dots,0\}) \cup Q'$, where $Q'$ is monotone: if $x \in Q'$ and $x_i \leq z_i$ for all $i$ then $z \in Q'$.

\paragraph{Remark} The standard setting of promise polymorphisms is more general: the predicates $P,Q$ can range over different alphabets, and we only require the existence of a homomorphism from $P$ to $Q$. In our case it is natural to ask that the homomorphism be injective, in which case we can assume that $\Sigma_i \subseteq \Delta_i$ and $P \subseteq Q$. In this setting, the condition on $(\phi_1,\dots,\phi_m) \in \Phi$ should be that $\phi_1,\dots,\phi_m$ are injective.

\Cref{thm:main-unary} fails in this setting, as the following example demonstrates:
\begin{align*}
 P &= \{x \in \{0,1\}^3 \mid x \neq (0,0,0),(1,1,1)\} \subseteq \{0,1\}^3, \\
 Q &= P \cup \{x \in \{0,2\}^3 \mid x \neq (0,0,0)\} \subseteq \{0,1,2\}^3, \\
 \Phi &= \{(\phi,\phi,\phi) \mid \phi\colon \{0,1\} \to \{0,1,2\} \text{ is injective and } 0 \in \im\phi\}.
\end{align*}
One checks that $(P,Q)$ is $\Phi$-box-trivial for $n = 1$. In contrast, let
\[
 f(a,b) =
 \begin{cases}
     0 & \text{if } a = b = 1, \\
     2 & \text{otherwise}.
 \end{cases}
\]
Then $(f,f,f)$ is a $2$-ary polymorphism which is not $\Phi$-box-trivial.

\paragraph{Paper organization} We prove two simple preliminary results in \Cref{sec:prel}. We then prove \Cref{thm:main-binary} in \Cref{sec:reduction-to-binary} and \Cref{thm:main-unary} in \Cref{sec:reduction-to-unary}. We formulate and prove a certificate version of \Cref{thm:main-binary} in \Cref{sec:certificate-version}.
A Lean formalization of the results is available at
\href{https://github.com/YuvalFilmus/box-triviality}
{\texttt{YuvalFilmus/box-triviality}}.

\paragraph{Acknowledgments.} The work was supported by the Israel Science Foundation, Grant No.\ 507/24.

\begin{aiusagebox}
\begin{center}
\textbf{AI usage statement}
\end{center}

\vspace{0.3em}

ChatGPT 5.6 Sol came up with the definition of box-triviality, proved the main results, and formalized them in Lean.
The author improved the formulation of the main theorems, verified the correctness of the proofs, and takes full responsibility
for the final content.
\end{aiusagebox}

\section{Preliminary results}
\label{sec:prel}

In this section we prove two simple results:
\begin{itemize}
\item Box-triviality is ``closed downwards'': if $(P,Q)$ is $\Phi$-box-trivial for $n$, then it is $\Phi$-box-trivial for all $n' \leq n$.
\item In the definition of $\Phi$-box-triviality, we can assume that all $(\phi_1,\dots,\phi_m) \in \Phi$ are polymorphisms.
\end{itemize}

\begin{lemma} \label{lem:downwards}
Let $P \subseteq \Sigma_1 \times \cdots \times \Sigma_m$, $Q \subseteq \Delta_1 \times \cdots \times \Delta_m$, and $\Phi \subseteq \Delta_1^{\Sigma_1} \times \cdots \times \Delta_m^{\Sigma_m}$.

If the relation pair $(P,Q)$ is $\Phi$-box-trivial for $n$, then it is $\Phi$-box-trivial for all $n' \le n$.
\end{lemma}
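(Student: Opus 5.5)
Given an $n'$-ary polymorphism $(g_1,\dots,g_m)$ of $(P,Q)$ with $n' \le n$, we pad it with dummy coordinates to obtain an $n$-ary polymorphism. Set $f_i(x_1,\dots,x_n) = g_i(x_1,\dots,x_{n'})$. This is still a polymorphism: given $x^{(1)},\dots,x^{(n)} \in P$, the output of $(f_1,\dots,f_m)$ on these rows coincides with the output of $(g_1,\dots,g_m)$ on $x^{(1)},\dots,x^{(n')} \in P$, which lies in $Q$. By hypothesis, $(f_1,\dots,f_m)$ is $\Phi$-box-trivial, and it remains to transfer each case back to $(g_1,\dots,g_m)$.

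\emph{Box type.} By construction $\im f_i = \im g_i$, so $\im g_1 \times \cdots \times \im g_m = \im f_1 \times \cdots \times \im f_m \subseteq Q$, and $(g_1,\dots,g_m)$ is of box type.

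\emph{Dictator type.} There exist $s \in [n]$ and $(\phi_1,\dots,\phi_m) \in \Phi$ with $f_i(x) = \phi_i(x_s)$ for all $i$. We split according to $s$.
\begin{itemize}
\item If $s \le n'$, then $g_i(x_1,\dots,x_{n'}) = \phi_i(x_s)$, so $(g_1,\dots,g_m)$ is of dictator type with the same $s$ and $\phi$.
\item If $s > n'$, then $f_i$ does not depend on $x_s$. Hence $\phi_i(x_s) = f_i(x)$ is constant in $x_s$ (vary $x_s$ with the other coordinates fixed), i.e., each $\phi_i$ is constant. Then each $g_i$ is constant, and in fact $g_i(y) = \phi_i(y_1)$, so $(g_1,\dots,g_m)$ is of dictator type with $s = 1$; this uses $n' \ge 1$, which holds since arities are at least~$1$.
\end{itemize}

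The only step needing care is this last subcase, where the dictator coordinate falls among the dummy variables. Constancy of $\phi_i$ resolves it, and this does not even require the nonemptiness assumptions on the alphabets. No earlier results are needed.
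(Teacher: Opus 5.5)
Your proposal is correct and follows essentially the same route as the paper's proof. You pad with dummy coordinates and split into box type, dictator type with $s \le n'$, and dictator type with $s > n'$; in the last case the $\phi_i$ are forced to be constant, so $s = 1$ works.
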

\begin{proof}
Let $(f_1,\dots,f_m)$ be an $n'$-ary polymorphism of $(P,Q)$, for some $n' \leq n$. We construct an $n$-ary polymorphism $(g_1,\dots,g_m)$ of $(P,Q)$ by ignoring the last $n-n'$ coordinates:
\[
 g_i(x_1,\dots,x_n) = f_i(x_1,\dots,x_{n'}).
\]

By assumption, $(g_1,\dots,g_m)$ is either of dictator type or of box type. We consider three cases:
\begin{itemize}
\item $(g_1,\dots,g_m)$ is of dictator type for $s \leq n'$ and $(\phi_1,\dots,\phi_m) \in \Phi$.

In this case, $f_i(x) = \phi_i(x_s)$, and so $(f_1,\dots,f_m)$ is also of dictator type for the same $s$ and $(\phi_1,\dots,\phi_m)$.

\item $(g_1,\dots,g_m)$ is of dictator type for $s > n'$ and $(\phi_1,\dots,\phi_m) \in \Phi$.

Since $g_1,\dots,g_m$ do not depend on the $s$th coordinate, the functions $\phi_1,\dots,\phi_m$ must be constant. Consequently, $f_i(x) = \phi_i(x_1)$ for all $i$, and so $(f_1,\dots,f_m)$ is again of dictator type.

\item $(g_1,\dots,g_m)$ is of box type.

Since $\im f_i = \im g_i$ for all $i$, also $(f_1,\dots,f_m)$ is of box type. \qedhere
\end{itemize}
\end{proof}

\begin{lemma} \label{lem:Phi}
Let $P \subseteq \Sigma_1 \times \cdots \times \Sigma_m$, $Q \subseteq \Delta_1 \times \cdots \times \Delta_m$, and $\Phi \subseteq \Delta_1^{\Sigma_1} \times \cdots \times \Delta_m^{\Sigma_m}$.

Let $\Psi$ consist of those $(\phi_1,\dots,\phi_m) \in \Phi$ which are polymorphisms.

The relation pair $(P,Q)$ is $\Phi$-box-trivial for $n$ iff it is $\Psi$-box-trivial for $n$.
\end{lemma}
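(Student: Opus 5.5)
Since $\Psi \subseteq \Phi$, one direction is immediate: every $\Psi$-box-trivial tuple is $\Phi$-box-trivial. The dictator witness $(\phi_1,\dots,\phi_m) \in \Psi$ also lies in $\Phi$, and the box condition does not refer to $\Phi$ at all. Hence if $(P,Q)$ is $\Psi$-box-trivial for $n$, it is $\Phi$-box-trivial for $n$.

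For the converse, assume $(P,Q)$ is $\Phi$-box-trivial for $n$ and let $(f_1,\dots,f_m)$ be an $n$-ary polymorphism. It is then of box type or of dictator type. In the box case we are done. In the dictator case there are $s \in [n]$ and $(\phi_1,\dots,\phi_m) \in \Phi$ with $f_i(x) = \phi_i(x_s)$ for all $i$. It remains to show that $(\phi_1,\dots,\phi_m)$ is a (unary) polymorphism of $(P,Q)$, so that it lies in $\Psi$.

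To see this, take $z \in P$ and feed the $n$-ary polymorphism the inputs $x^{(1)} = \cdots = x^{(n)} = z$, each of which lies in $P$. Then $y_i = (z_i,\dots,z_i)$, so $f_i(y_i) = \phi_i(z_i)$. The polymorphism property gives $(\phi_1(z_1),\dots,\phi_m(z_m)) \in Q$. Thus $(\phi_1,\dots,\phi_m)$ is a polymorphism, hence belongs to $\Psi$, and $(f_1,\dots,f_m)$ is of dictator type with respect to $\Psi$.

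There is no real obstacle here. The only point needing care is that the diagonal substitution works uniformly for every $s$, including when $n \ge 1$ is arbitrary, and it requires no assumption on $P$. In particular, no full-projection hypothesis is needed.
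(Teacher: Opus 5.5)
Your proposal is correct and follows essentially the same argument as the paper. You use the trivial inclusion $\Psi \subseteq \Phi$ for one direction, and for the other you feed the dictator polymorphism the constant inputs $x^{(1)} = \cdots = x^{(n)} = z \in P$ to show that $(\phi_1,\dots,\phi_m)$ is itself a polymorphism.
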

\begin{proof}
If $(P,Q)$ is $\Psi$-box-trivial for $n$ then it is also $\Phi$-box-trivial for $n$, since $\Psi \subseteq \Phi$.

In the other direction, suppose that $(P,Q)$ is $\Phi$-box-trivial for $n$, and let $(f_1,\dots,f_m)$ be an $n$-ary polymorphism of $(P,Q)$. By assumption, it is either of dictator type or of box type. If $(f_1,\dots,f_m)$ is of box type then we are done, so suppose that $(f_1,\dots,f_m)$ is of dictator type: for some $s$ and some $(\phi_1,\dots,\phi_m) \in \Phi$, we have $f_i(x) = \phi_i(x_s)$ for all $i$. To complete the proof, we show that $(\phi_1,\dots,\phi_m)$ is a polymorphism, and so belongs to $\Psi$.

Let $(\sigma_1,\dots,\sigma_m) \in P$. Since $(f_1,\dots,f_m)$ is a polymorphism, $(f_1(\sigma_1,\dots,\sigma_1),\dots,f_m(\sigma_m,\dots,\sigma_m)) \in Q$. Since $f_i(\sigma_i,\dots,\sigma_i) = \phi_i(\sigma_i)$, we see that $(\phi_1(\sigma_1),\dots,\phi_m(\sigma_m)) \in Q$, and so $(\phi_1,\dots,\phi_m)$ is a polymorphism.
\end{proof}

\section{Reduction to binary}
\label{sec:reduction-to-binary}

In this section we prove \Cref{thm:main-binary}, closely following the proof of the corresponding result \cite[\oldbinary]{Filmus26}.

\mainbinary*

In view of \Cref{lem:Phi}, we can assume without loss of generality that all members of $\Phi$ are polymorphisms.

Given a relation pair $(P,Q)$ which is $\Phi$-box-trivial for $n = 2$, we need to show that it is $\Phi$-box-trivial for all $n$. \Cref{lem:downwards} shows that $(P,Q)$ is $\Phi$-box-trivial for $n = 1$. We show that $(P,Q)$ is $\Phi$-box-trivial for $n \ge 3$ using an inductive argument, encapsulated in the following lemma.

\begin{lemma} \label{lem:main-binary}
Suppose that $P,Q,\Phi$ satisfy the assumptions of \Cref{thm:main-binary}.

If $(P,Q)$ is $\Phi$-box-trivial for $n = 2$ and for some $n \ge 1$, then $(P,Q)$ is also $\Phi$-box-trivial for $n + 1$.
\end{lemma}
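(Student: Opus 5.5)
The plan is induction on the arity by \emph{identifying coordinates}, combined with \emph{restriction} to binary polymorphisms. Throughout we assume, by \Cref{lem:Phi}, that every member of $\Phi$ is a polymorphism. Let $(f_1,\dots,f_m)$ be an $(n+1)$-ary polymorphism of $(P,Q)$; we may take $n \ge 2$, since for $n=1$ the claim is the $n=2$ hypothesis itself.

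Two derived objects will be used.
\begin{itemize}
\item For each pair $j<k$ of coordinates, let $g^{jk}$ be the $n$-ary tuple obtained by identifying coordinates $j$ and $k$. It is again a polymorphism, since setting row $k$ equal to row $j$ keeps all columns in $P$. By the induction hypothesis each $g^{jk}$ is of dictator type or of box type.
\item Fix rows $z^{(1)},\dots,z^{(n-1)} \in P$ in all coordinates except two chosen ones. The result is a binary polymorphism $h_i(a,b) = f_i(z^{(1)}_i,\dots,z^{(n-1)}_i,a,b)$, which by hypothesis is of dictator type or box type.
\end{itemize}
Because $P$ has full projections, every symbol of $\Sigma_i$ appears in some $z \in P$. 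So these restrictions together cover the whole domain of each $f_i$, and $\im f_i$ is the union of the images of the restrictions.

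The first step is the easy case: some $g^{jk}$ is of box type. Then $\im g^{jk}_i \subseteq \im f_i$ gives no conclusion by itself. Instead we use the binary restrictions in the two coordinates $j,k$. If each such restriction is of box type, or is a dictator whose value agrees with the diagonal, we show that $\im f_i$ is covered by the diagonal images, so $\im f_1\times\cdots\times\im f_m \subseteq Q$. If some restriction is a dictator with a nonconstant $\phi$, we show this contradicts the box structure of $g^{jk}$ elsewhere.

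The main case is when every $g^{jk}$ is of dictator type, say $g^{jk}_i(x) = \phi^{jk}_i(x_{s(jk)})$. Since $n+1 \ge 3$, we compare $g^{12}$, $g^{13}$ and $g^{23}$ (or pairs with disjoint supports when $n+1\ge 4$). If some $\phi^{jk}$ is constant, $f$ is constant on all inputs whose rows $j,k$ agree. Using $|\Sigma_i|\ge 2$ and the binary restrictions, we propagate this to show that $f$ is either constant, hence a dictator with a constant $\phi$, or of box type. Otherwise the relevant $\phi$'s are nonconstant, and consistency on inputs where three rows coincide forces all the $g^{jk}$ to point to a common original coordinate $s$ with a common $\phi \in \Phi$. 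We then show that $f_i(x) = \phi_i(x_s)$ everywhere. Fix all rows except $s$ and one other row $t$; the binary restriction agrees with $\phi_i(x_s)$ on the diagonal $x_s = x_t$ and at points obtained by merging $t$ with a third row. The binary dictator-or-box dichotomy then pins it down to be the dictator on $x_s$.

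The step I expect to be the main obstacle is this final gluing argument. Each binary restriction may individually be of box type, even while the $g^{jk}$ are dictators, and such box-type restrictions must be ruled out. To do so I would take a restriction of box type, vary the fixed rows using full projections to realize a column pattern, and exhibit an input on which a merged $g^{jk}$ would have to take two different values. The hypothesis $|\Sigma_i|\ge 2$ supplies the two distinct symbols needed for this contradiction.
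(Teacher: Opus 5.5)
\textbf{There is a genuine gap.} Your setup is sound: identification minors $g^{jk}$ and restrictions with $n-1$ rows fixed are polymorphisms, and full projections make the restrictions cover each domain. But the steps that carry the weight are either false or not yet arguments.

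In the main case, the fact that every $g^{jk}$ is of dictator type does \emph{not} force a common original coordinate. At arity $3$, a majority pattern $f_i(a,a,b)=f_i(a,b,a)=f_i(b,a,a)=\phi_i(a)$ makes $g^{12},g^{13},g^{23}$ dictators on three different merged coordinates. These are perfectly consistent on inputs with three equal rows. A minority pattern $f_i(a,a,b)=f_i(a,b,a)=f_i(b,a,a)=\phi_i(b)$ makes them point to the original coordinates $3,2,1$ respectively. For larger arity, identification minors only constrain $f_i$ on columns with a repeated entry. Columns with pairwise distinct entries exist as soon as $|\Sigma_i|\ge n+1$, and the alphabets may be infinite. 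So everything is pushed onto the binary restrictions.

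Your first case is also wrong as stated. Suppose some nonconstant $\phi\in\Phi$ has $\im\phi_1\times\cdots\times\im\phi_m\subseteq Q$; the theorem allows arbitrary $\Phi$. Then $f_i(x)=\phi_i(x_j)$ has $g^{jk}$ of box type while every restriction in coordinates $j,k$ is a nonconstant dictator, so there is no contradiction to be had. Moreover, $\im f_i\subseteq\im g^{jk}_i$, which is what you need to transfer box type from $g^{jk}$ to $f$, fails in general. Take $f_i(x)=a_i$ if $x_j=x_k$ and $f_i(x)=b_i$ otherwise, with $a_i\neq b_i$ and $\prod_i\{a_i,b_i\}\subseteq Q$.

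The real gap is the one you flag as the main obstacle, and it is the heart of the lemma rather than a technicality. A binary restriction of box type only certifies $\im h_1\times\cdots\times\im h_m\subseteq Q$ for one choice of the fixed rows. The failure of $f$ to be of box type is witnessed by some $\omega\notin Q$ whose coordinates $\omega_i$ may be attained by $f_i$ under different fixed rows for different $i$. No single restriction or minor of $f$ need see this, and ``varying the fixed rows to realize a column pattern'' gives no mechanism for transferring it.

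The missing idea, as in the paper, has three parts:
\begin{enumerate}
\item Fix such a witness up front, with $\omega_i\in\im f_i|_{\alpha_i}$, where $f_i|_\sigma$ fixes the \emph{last} row to $\sigma$. The induction hypothesis is applied to these $n$-ary slices, not to identification minors.
\item Apply the binary hypothesis not to a restriction of $f$ but to a synthetic binary polymorphism $g$. Here $g_i|_\sigma$ is the same constant as $f_i|_\sigma$, or the same nonconstant $\phi$, or, when $f_i|_\sigma$ depends on at least two coordinates, some nonconstant function with image inside $\im f_i|_\sigma$ that contains $\omega_i$ if $\sigma=\alpha_i$. Then $g$ inherits the witness and must be a dictator. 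Full projections together with $\omega$ then rule out box-type slices.
\item Run a connectivity argument on the graph induced by $P$, using a second synthetic polymorphism $\chi$, to force all slices to use the same coordinate.
\end{enumerate}
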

\begin{proof}
Let $(f_1,\dots,f_m)$ be an $(n+1)$-ary polymorphism of $(P,Q)$. We need to show that it is either of dictator type or of box type.

We can think of the input to $f_1,\dots,f_m$ as an $(n+1) \times m$ matrix whose rows belong to $P$. The definition of polymorphism states that if we apply $f_1,\dots,f_m$ on the columns of the matrix, then we obtain a member of $Q$.
If we fix the last row of the matrix to any member of $P$ then we obtain an $n$-ary polymorphism to which we can apply the assumption that $(P,Q)$ is $\Phi$-box-trivial for $n$.

Formally, define $f_i|_\sigma\colon \Sigma_i^n \to \Delta_i$ for $\sigma \in \Sigma_i$ by $f_i|_\sigma(x) = f_i(x,\sigma)$. 
If $(\sigma_1,\dots,\sigma_m) \in P$ then $(f_1|_{\sigma_1},\dots,f_m|_{\sigma_m})$ is an $n$-ary polymorphism of $(P,Q)$, and so by assumption, it is either of dictator type or of box type.

If $(f_1,\dots,f_m)$ is of box type then we are done. Otherwise, there exists $(\omega_1,\dots,\omega_m) \notin Q$ such that for all $i$ we have $\omega_i \in \im f_i$, say $\omega_i \in \im f_i|_{\alpha_i}$. We will use this ``witness'' to show that $(f_1,\dots,f_m)$ is of dictator type.

\paragraph{\boldmath Reduction to $n = 2$}
The main idea of the proof is to construct a $2$-ary polymorphism $(g_1,\dots,g_m)$ which ``encapsulates'' the original polymorphism $(f_1,\dots,f_m)$, and to use the assumption that $(P,Q)$ is $\Phi$-box-trivial for $n = 2$.
We define $g_i$ so that $g_i|_\sigma$ captures the behavior of $f_i|_\sigma$:
\begin{itemize}
\item If $f_i|_\sigma$ is the constant $c$ function, then we define $g_i|_\sigma \equiv c$.
\item If $f_i|_\sigma(x) = \phi(x_s)$ for some non-constant $\phi$, then we define $g_i|_\sigma = \phi$.
\item If $f_i|_\sigma$ depends on at least two coordinates, then we choose $g_i|_\sigma$ to be an arbitrary function satisfying
\begin{enumerate}[(i)]
\item $\im g_i|_\sigma \subseteq \im f_i|_\sigma$.
\item $g_i|_\sigma$ is non-constant.
\item If $\sigma = \alpha_i$ then $\omega_i \in \im g_i|_\sigma$.
\end{enumerate}
Let us explain why this is possible. Since $f_i|_\sigma$ depends on at least two coordinates, in particular it is non-constant, and so we can choose two different $a,b \in \im f_i|_\sigma$. Since $|\Sigma_i| \ge 2$, we can guarantee that $\im g_i|_\sigma = \{a,b\}$, and in particular $g_i|_\sigma$ is non-constant. Finally, if $\sigma = \alpha_i$ then by definition $\omega_i \in \im f_i|_\sigma$, and so we can arrange that $a,b$ include $\omega_i$.
\end{itemize}

Before showing that $(g_1,\dots,g_m)$ is indeed a polymorphism, we record two useful properties of the construction:
\begin{itemize}
\item $\omega_i \in \im g_i|_{\alpha_i}$ for all $i$.

Indeed, recall that $\omega_i \in \im f_i|_{\alpha_i}$. If $f_i|_{\alpha_i}$ depends on at least two coordinates then $\omega_i \in \im g_i|_{\alpha_i}$ by construction, and otherwise this holds since $\im g_i|_{\alpha_i} = \im f_i|_{\alpha_i}$.

\item $\im g_i|_\sigma \subseteq \im f_i|_\sigma$ for all $i$.

Indeed, if $f_i|_\sigma$ is constant or depends on a single coordinate then $\im g_i|_\sigma = \im f_i|_\sigma$, and otherwise $\im g_i|_\sigma \subseteq \im f_i|_\sigma$ by construction.
\end{itemize}

\paragraph{\boldmath $(g_1,\dots,g_m)$ is a polymorphism} In order to show that $(g_1,\dots,g_m)$ is a polymorphism, it suffices to show that $(g_1|_{\sigma_1},\dots,g_m|_{\sigma_m})$ is a polymorphism for all $(\sigma_1,\dots,\sigma_m) \in P$.

To this end, recall that $(f_1|_{\sigma_1},\dots,f_m|_{\sigma_m})$ is an $n$-ary polymorphism. By assumption, $(f_1|_{\sigma_1},\dots,f_m|_{\sigma_m})$ is either of dictator type or of box type.
If $(f_1|_{\sigma_1},\dots,f_m|_{\sigma_m})$ is of dictator type, say $f_i|_{\sigma_i} = \phi_i(x_s)$ for all $i$, then $g_i|_{\sigma_i} = \phi_i$ for all $i$, and so $(g_1|_{\sigma_1},\dots,g_m|_{\sigma_m})$ is a polymorphism.
If $(f_1|_{\sigma_1},\dots,f_m|_{\sigma_m})$ is of box type then since $\im g_i|_{\sigma_i} \subseteq \im f_i|_{\sigma_i}$ for all $i$, also $(g_1|_{\sigma_1},\dots,g_m|_{\sigma_m})$ is of box type, and so a polymorphism. 

\paragraph{\boldmath Applying $\Phi$-box-triviality for $n = 2$}
By assumption, $(g_1,\dots,g_m)$ is $\Phi$-box-trivial, and so it is either of dictator type or of box type.
Since $\omega_i \in \im g_i$ for all $i$ and $(\omega_1,\dots,\omega_m) \notin Q$, we see that $(g_1,\dots,g_m)$ cannot be of box type, and so must be of dictator type: there exist $s \in \{1,2\}$ and $(\phi_1,\dots,\phi_m) \in \Phi$ such that $g_i(x) = \phi_i(x_s)$ for all $i$.

The easy case is when $s = 2$. In this case, $g_i|_\sigma$ is the constant $\phi_i(\sigma)$ function for all $i,\sigma$. Going over the cases in the definition of $g_i|_\sigma$, this is only possible if $f_i|_\sigma \equiv \phi_i(\sigma)$. We conclude that $f_i(x) = \phi_i(x_{n+1})$ for all $i$, and so $(f_1,\dots,f_m)$ is of dictator type.

\paragraph{\boldmath $(g_1,\dots,g_m)$ is a dictator on the first coordinate}

The more challenging case is when $s = 1$. In this case, $g_i|_\sigma = \phi_i$ for all $i,\sigma$, where $(\phi_1,\dots,\phi_m) \in \Phi$. For given $i,\sigma$, this could happen since $f_i|_\sigma = \phi_i(x_{s(i,\sigma)})$, or when $f_i|_\sigma$ depends on at least two coordinates. We would like to rule out the latter case.

To this end, consider any $i,\sigma_i$. By the assumption that $P|_i = \Sigma_i$, we can complete $\sigma_i$ to some $(\sigma_1,\dots,\sigma_m) \in P$. Then $(f_1|_{\sigma_1},\dots,f_m|_{\sigma_m})$ is either of dictator type or of box type. In the former case, we are done, so we need to rule out the latter case. Indeed, recall that $\omega_i \in \im g_i|_{\alpha_i} = \im g_i|_{\sigma_i} \subseteq \im f_i|_{\sigma_i}$ for all $i$, ruling out the possibility that $(f_1|_{\sigma_1},\dots,f_m|_{\sigma_m})$ is of box type (since $(\omega_1,\dots,\omega_m) \notin Q$).

We conclude that $f_i|_\sigma = \phi_i(x_{s(i,\sigma)})$ for all $i,\sigma$. If we could show that all $s(i,\sigma)$ coincide, it would follow that $(f_1,\dots,f_m)$ is of dictator type. In fact, it suffices to show that $s(i,\sigma)$ coincide for all $i$ such that $\phi_i$ is non-constant. We prove that this is the case using a proof by contradiction: assuming that this condition doesn't hold, we construct a $2$-ary polymorphism of $(P,Q)$ which is not $\Phi$-box-trivial.

Let $V$ consist of all $i$ such that $\phi_i$ is non-constant, and let $\phi_i \equiv c_i$ for $i \notin V$. Our analysis above shows that if $(\sigma_1,\dots,\sigma_m) \in P$ then there exists $s$ such that $f_i|_{\sigma_i}(x) = \phi_i(x_s)$ for all $i$. When $i \in V$, this defines $s$ uniquely, and so $s(i,\sigma_i) = s(j,\sigma_j)$ whenever $i,j \in V$.

We can express the above condition in the terminology of graphs. Consider the graph whose vertices are $(i,\sigma)$ for $i \in V$ and $\sigma \in \Sigma_i$, and in which there is an edge connecting $(i,\sigma_i)$ and $(j,\sigma_j)$ for all $(\sigma_1,\dots,\sigma_m) \in P$. If $(i,\sigma)$ is connected to $(j,\tau)$ then $s(i,\sigma) = s(j,\tau)$. The assumption that $s(i,\sigma)$ for $i \in V$ do not all coincide implies that the graph has more than one connected component.

Let $\bigcup_{i \in V} \{i\} \times \Sigma_{i,0}$ be an arbitrary connected component, and let $\Sigma_{i,1} = \Sigma_i \setminus \Sigma_{i,0}$. Since every connected component includes vertices of the form $(i,\cdot)$ for all $i \in V$, we see that $\Sigma_{i,0},\Sigma_{i,1}$ are both non-empty for all $i \in V$.

The definition of the graph implies that for each $(\sigma_1,\dots,\sigma_m) \in P$, either $\sigma_i \in \Sigma_{i,0}$ for all $i \in V$, or $\sigma_i \in \Sigma_{i,1}$ for all $i \in V$. We use this to construct a $2$-ary polymorphism $(\chi_1,\dots,\chi_m)$ which is not $\Phi$-box-trivial, defined as follows:
\[
 \chi_i(\sigma,\tau) =
 \begin{cases}
     c_i & \text{if } i \notin V, \\
     \phi_i(\sigma) & \text{if } i \in V \text{ and } \sigma \in \Sigma_{i,0}, \\
     \phi_i(\tau) & \text{if } i \in V \text{ and } \sigma \in \Sigma_{i,1}.
 \end{cases}
\]

To see that $(\chi_1,\dots,\chi_m)$ is a polymorphism, observe that applying these functions to $(\sigma_1,\dots,\sigma_m),\allowbreak(\tau_1,\dots,\tau_m) \in P$ results in either $(\phi_1(\sigma_1),\dots,\phi_m(\sigma_m)) \in Q$ or $(\phi_1(\tau_1),\dots,\phi_m(\tau_m)) \in Q$.

Since $\omega_i \in \im g_i|_{\alpha_i} = \im \phi_i = \im \chi_i$, we see that $(\chi_1,\dots,\chi_m)$ is not of box type (since $(\omega_1,\dots,\omega_m) \notin Q$). We complete the proof by showing that $\chi_i$ depends on both coordinates whenever $i \in V$ (we can assume that $V \neq \emptyset$, since otherwise we can view $(g_1,\dots,g_m)$ as a dictator on the second coordinate).

Let $i \in V$. Choose $\sigma_0 \in \Sigma_{i,0}$, $\sigma_1 \in \Sigma_{i,1}$, and $\tau \in \Sigma_i$ such that $\phi_i(\tau) \neq \phi_i(\sigma_0)$. The following calculations show that $\chi_i$ depends on both coordinates:
\begin{align*}
\chi_i(\sigma_0,\tau) &= \phi_i(\sigma_0), &
\chi_i(\sigma_1,\tau) &=
\phi_i(\tau); \\
\chi_i(\sigma_1,\sigma_0) &= \phi_i(\sigma_0), &
\chi_i(\sigma_1,\tau) &= \phi_i(\tau).
\end{align*}

Thus $(\chi_1,\dots,\chi_m)$ is a $2$-ary polymorphism which is not $\Phi$-box-trivial, contrary to assumption. This contradiction shows that $s(i,\sigma)$ must coincide for all $i \in V$ and $\sigma \in \Sigma_i$, say all are equal to $s$. Therefore $f_i(x) = \phi_i(x_s)$ for all $i$, and so $(f_1,\dots,f_m)$ is of dictator type.
\end{proof}

\subsection{Certificate version}
\label{sec:certificate-version}

Comparing \Cref{thm:main-binary} (when $P = Q$) to the corresponding result \cite[\oldbinary]{Filmus26}, both the assumption and the conclusion are weaker, and so it seems that these theorems are incomparable. However, it turns out that a result in the style of \cite[\oldbinary]{Filmus26} follows from \Cref{thm:main-binary}.

Let us first extend the definition of certificate-triviality to the setting of relation pairs.

\begin{definition}[Certificate-triviality]
Let $\Phi \subseteq \Delta_1^{\Sigma_1} \times \cdots \times \Delta_m^{\Sigma_m}$. 

An $m$-tuple of functions $(f_1,\dots,f_m)$, where $f_i\colon \Sigma_i^n \to \Delta_i$, is $\Phi$-certificate-trivial if either of the following cases holds:
\begin{description}
    \item[Dictator type] There exist $s \in [n]$ and $(\phi_1,\dots,\phi_m) \in \Phi$ such that $f_i(x) = \phi_i(x_s)$ for all $i$.
    \item[Certificate type] There exist a subset $I \subseteq [m]$ and symbols $\delta_i \in \Delta_i$ for all $i \in I$ such that (i) $f_i \equiv \delta_i$ for all $i \in I$, (ii) $x \in Q$ whenever $x_i = \delta_i$ for all $i \in I$.
\end{description}

A relation pair $(P,Q)$, where $P \subseteq \Sigma_1 \times \cdots \times \Sigma_m$ and $Q \subseteq \Delta_1 \times \cdots \times \Delta_m$, is $\Phi$-certificate-trivial for $n$, where $n \ge 1$, if every $n$-ary polymorphism of $(P,Q)$ is $\Phi$-certificate-trivial.
\end{definition}

We can now formulate and prove a result which extends \cite[\oldbinary]{Filmus26} to the setting of relation pairs.

\begin{corollary} \label{cor:main-binary}
Let $P \subseteq \Sigma_1 \times \cdots \times \Sigma_m$, $Q \subseteq \Delta_1 \times \cdots \times \Delta_m$, and $\Phi \subseteq \Delta_1^{\Sigma_1} \times \cdots \times \Delta_m^{\Sigma_m}$.

Suppose that $|\Sigma_i| \ge 2$ for all $i$, and that $P$ has full projections: $P|_i = \Sigma_i$.

The relation pair $(P,Q)$ is $\Phi$-certificate-trivial for all $n$ iff it is $\Phi$-certificate-trivial for $n=2$.
\end{corollary}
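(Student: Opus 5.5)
The plan is to reduce \Cref{cor:main-binary} to \Cref{thm:main-binary} by changing the output relation so that box type for the new pair means exactly certificate type for the old pair. The gap between the two notions is that box type only inspects the images $\im f_i$, while certificate type lets every non-constant coordinate be completely arbitrary. So I would enlarge each output alphabet by a wildcard: $\tilde\Delta_i = \Delta_i \cup \{\ast\}$, and
\[
 \tilde Q = \{ y \in \tilde\Delta_1 \times \cdots \times \tilde\Delta_m \mid \text{every } z \in \Delta_1 \times \cdots \times \Delta_m \text{ with } z_i = y_i \text{ whenever } y_i \neq \ast \text{ lies in } Q \}.
\]
Then $\tilde Q \cap (\Delta_1 \times \cdots \times \Delta_m) = Q$, and $\Phi$ is regarded as a subset of $\tilde\Delta_1^{\Sigma_1} \times \cdots \times \tilde\Delta_m^{\Sigma_m}$. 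The hypotheses $|\Sigma_i| \ge 2$ and full projections concern only $P$, so \Cref{thm:main-binary} applies to $(P,\tilde Q)$. Since $\Phi$-certificate-triviality is closed downwards by the argument of \Cref{lem:downwards}, it suffices to prove, for each fixed $n$:
\[
 (P,Q) \text{ is } \Phi\text{-certificate-trivial for } n \iff (P,\tilde Q) \text{ is } \Phi\text{-box-trivial for } n. \qquad (\ast)
\]
Given $(\ast)$ for $n=2$ and for all $n$, \Cref{thm:main-binary} finishes the proof.

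For the direction ($\Leftarrow$), let $f$ be a polymorphism of $(P,Q)$. It is also a polymorphism of $(P,\tilde Q)$, so it is either of dictator type or of box type for $\tilde Q$. Box type says $\prod_i \im f_i \subseteq \tilde Q$. This is weaker than certificate type, since the non-constant coordinates are only required to range over their images. To upgrade it, I would modify $f$: for each non-constant $f_i$, let $f'_i$ take the value $\ast$ on some inputs and its old values elsewhere. Here $|\Sigma_i| \ge 2$ supplies room, for instance by letting $f'_i(x)=\ast$ whenever $f_i(x)$ differs from a fixed value $f_i(x^0)$.

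The key sub-claim is that $f'$ is still a polymorphism of $(P,\tilde Q)$, which needs a case analysis on the restricted behaviour. Since $\ast \in \im f'_i$ for every non-constant $i$, $f'$ cannot be a dictator with a map in $\Phi$ (whose values lie in $\Delta_i$). So $f'$ is of box type, and $\prod_i \im f'_i \subseteq \tilde Q$ is exactly certificate type for $f$, with $I$ the set of constant coordinates.

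For ($\Rightarrow$), let $g$ be a polymorphism of $(P,\tilde Q)$. Replace every $\ast$ in $g_i$ by an arbitrary fixed value of $\Delta_i$; by the definition of $\tilde Q$ the result is a polymorphism of $(P,Q)$, hence dictator or certificate type. I would then transfer this back to $g$. In the certificate case the constant coordinates force $\tilde Q$. In the dictator case, varying the replacement value should show that $g$ never outputs $\ast$.

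The main obstacle is the sub-claim in the ($\Leftarrow$) direction, that inserting $\ast$ keeps the tuple a polymorphism, and the matching subtlety in ($\Rightarrow$) when the replacement interacts with a dictator. If the wildcard trick does not go through cleanly, my fallback is to skip the reduction and rerun the proof of \Cref{lem:main-binary} directly with certificate type in place of box type. In that proof I would choose the witness so that the constant coordinates of $f$ do not force $Q$, and I would build $g_i|_\sigma$ so as to preserve which restrictions are constant, rather than preserving images.
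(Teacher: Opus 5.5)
Your reduction rests on the equivalence $(\ast)$, and that equivalence is false. The sub-claim you single out is exactly where it breaks. Box type for $Q$ only gives $\prod_i \im f_i \subseteq Q$. For $f'$ to be a polymorphism of $(P,\tilde Q)$, however, every output containing $\ast$'s must have \emph{all} its completions in $Q$. That is (a strengthening of) the certificate property you are trying to derive, so no case analysis will produce it. Concretely, take $m=1$, $P=\Sigma_1=\{0,1\}$, $\Delta_1=\{0,1,2\}$, $Q=\{0,1\}$. Then $\ast\notin\tilde Q$, so $\tilde Q=Q$. Every polymorphism of $(P,\tilde Q)$ maps into $\tilde Q$ and is therefore of box type, so $(P,\tilde Q)$ is $\Phi$-box-trivial for every $n$. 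Yet $f_1(a,b)=\min(a,b)$ is a $2$-ary polymorphism of $(P,Q)$ that is neither of dictator type nor of certificate type: it is non-constant and $Q\neq\Delta_1$. So ($\Leftarrow$) fails, and your $f'$ is not a polymorphism of $(P,\tilde Q)$ here.

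The ($\Rightarrow$) transfer is also unsound. Take $P=Q=\{00,01,10\}\subseteq\{0,1\}^2$ and $\Phi=\{(\id,\id)\}$. The pair $(P,Q)$ is $\Phi$-certificate-trivial for $n=1$. But $g=(\id,g_2)$ with $g_2(0)=0$, $g_2(1)=\ast$ is a $1$-ary polymorphism of $(P,\tilde Q)$ that is neither of dictator type nor of box type, since $(1,\ast)$ completes to $(1,1)\notin Q$. Replacing $\ast$ by $1$ yields the dictator $(\id,\id)$, and replacing it by $0$ yields the certificate-type $(\id,0)$. So varying the replacement value does not exclude $\ast$.

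The missing idea is that no auxiliary relation is needed. Moreover, the upgrade from box type to certificate type must invoke the $n=2$ certificate hypothesis again; the example above shows box-triviality alone cannot supply it.
\begin{itemize}
\item For the first step, certificate type already implies box type for the same $Q$. Hence $(P,Q)$ itself is $\Phi$-box-trivial for $n=2$, and by \Cref{thm:main-binary} for all $n$.
\item Now let $f$ be an $n$-ary polymorphism of box type with some $f_i$ non-constant. Define a $2$-ary $g$ by $g_i=f_i$ when $f_i$ is constant. When $f_i$ is non-constant, pick distinct $a,b\in\im f_i$ and set $g_i(c,d)=a$ if $c=d$ and $g_i(c,d)=b$ otherwise.
\item Since $\im g_i\subseteq\im f_i$, $g$ is of box type and thus a polymorphism.
\item Some $g_i$ depends on both arguments (this is where $|\Sigma_i|\ge 2$ enters), so $g$ is not of dictator type. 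Hence it is of certificate type.
\item Since $g$ and $f$ have the same constant coordinates with the same values, $f$ is of certificate type.
\end{itemize}

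Your fallback is to rerun the proof of \Cref{lem:main-binary} with certificate type, choosing $g_i|_\sigma$ to preserve which restrictions are constant and their values. It does appear workable: certificate-type restrictions pass to $g|_\sigma$, and the constant coordinates of $g$ and of $\chi$ coincide with those of $f$. As written, though, it is only an outline, and it redoes the whole induction where the short black-box argument above suffices.
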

\begin{proof}
Suppose that $(P,Q)$ is $\Phi$-certificate-trivial for $n=2$. We need to show that $(P,Q)$ is $\Phi$-certificate-trivial for all $n$.

We claim that $(P,Q)$ is $\Phi$-box-trivial for $n=2$, and so for all $n$ by \Cref{thm:main-binary}. Indeed, suppose that $(f_1,\dots,f_m)$ is a $2$-ary polymorphism of $(P,Q)$. By assumption, it is either of dictator type or of certificate type. In the latter case, let $I \subseteq [m]$ and $\delta_i \in \Delta_i$ for $i \in I$ be the corresponding certificate. Observe that any $x \in \im f_1 \times \cdots \times \im f_m$ satisfies $x_i = \delta_i$ for all $i \in I$, and so $x \in Q$. Thus $(f_1,\dots,f_m)$ is also of box type.

Now let $(f_1,\dots,f_m)$ be an $n$-ary polymorphism of $(P,Q)$. We need to show that it is $\Phi$-certificate-trivial. By \Cref{thm:main-binary}, $(f_1,\dots,f_m)$ is $\Phi$-box-trivial. If it is of dictator type, we are done, so assume it is of box type.
If all $f_i$ are constant then $(f_1,\dots,f_m)$ is also of certificate type, so suppose that at least one of these is non-constant.

We construct a $2$-ary polymorphism $(g_1,\dots,g_m)$ satisfying the following properties:
\begin{itemize}
\item If $f_i$ is constant then $g_i = f_i$.
\item If $f_i$ is non-constant then
\begin{enumerate}[(i)]
\item $\im g_i \subseteq \im f_i$.
\item $|\im g_i| \ge 2$.
\item $g_i$ depends on both coordinates.
\end{enumerate}
To construct $g_i$, choose any two distinct $a,b \in \im f_i$, and define
\[
 g_i(c,d) =
 \begin{cases}
     a & \text{if } c = d, \\
     b & \text{otherwise}.
 \end{cases}
\]
\end{itemize}

Since $\im g_i \subseteq \im f_i$ for all $i$, we see that $(g_1,\dots,g_m)$ is of box type and so a polymorphism. Since $(P,Q)$ is $\Phi$-certificate-trivial for $n=2$, necessarily $(g_1,\dots,g_m)$ is either of dictator type or of certificate type.

Since at least one $f_i$ is non-constant, $(g_1,\dots,g_m)$ cannot be of dictator type, hence must be of certificate type. By construction, $g_i$ is constant iff $f_i$ is constant, and in that case they have the same value, and so $(f_1,\dots,f_m)$ is also of certificate type.
\end{proof}

\section{Reduction to unary}
\label{sec:reduction-to-unary}

In this section we prove \Cref{thm:main-unary}, combining ideas from the proofs of \cite[\oldunary]{Filmus26} and \cite[\improvedunary]{Filmus26b}.

\mainunary*

In view of \Cref{lem:Phi}, we can assume without loss of generality that all members of $\Phi$ are polymorphisms. Another easy observation is that $m > 1$: otherwise $P|_1 = \Sigma_1$ forces $Q = \Sigma_1$, contrary to assumption.

We introduce a useful piece of notation: if $|\Sigma_i| = 2$ and $\sigma \in \Sigma_i$, then $\bar{\sigma}$ is the other element of $\Sigma_i$. If $|\Sigma_1| = \cdots = |\Sigma_m| = 2$ and $y \in \Sigma_1 \times \cdots \times \Sigma_m$, then $\bar{y} = (\bar{y}_1,\dots,\bar{y}_m)$.

We start by showing how each of the obstructions leads to a $2$-ary polymorphism which is not $\Phi$-box-trivial:

\begin{description}
\item[Unit witness obstruction]
Let $\chi_j(\sigma,\tau) = \sigma$ for $j \neq i$, and
\[
 \chi_i(\sigma,\tau) =
 \begin{cases}
     \bar{b} & \text{if } \sigma = \tau = \bar{b}, \\
     b & \text{otherwise}.
 \end{cases}
\]

This is a polymorphism since applying $(\chi_1,\dots,\chi_m)$ to $z,w \in P$ either yields $z$ or $z$ with the $i$th coordinate modified to $b$.

Since $\chi_i$ depends on both inputs, $(\chi_1,\dots,\chi_m)$ is not of dictator type. It is not of box type since $\im \chi_j = \Sigma_j$ for all $j$, and we assume that $Q \neq \Sigma_1 \times \cdots \times \Sigma_m$.

\item[Generalized upset obstruction]
We define
\[
 \chi_i(\sigma,\tau) =
 \begin{cases}
     y_i & \text{if } \sigma = \tau = y_i, \\
     \bar{y}_i & \text{otherwise}.     
 \end{cases}
\]

Let us first check that this is a polymorphism. Let $z,w \in P$. If $z = y$ then applying $(\chi_1,\dots,\chi_m)$ to $(z,w)$ yields $w \in P$, and otherwise the result is obtained from $z$ by changing the $i$th coordinate to $\bar{y}_i$ whenever $w_i = \bar{y}_i$.

All $\chi_i$ depend on both inputs and satisfy $\im \chi_i = \Sigma_i$, and so $(\chi_1,\dots,\chi_m)$ is neither of dictator type nor of box type.
\end{description}

We now move to the main part of the proof: assuming that $(P,Q)$ is $\Phi$-box-trivial for $n = 1$, we show that any $2$-ary polymorphism $(f_1,\dots,f_m)$  is $\Phi$-box-trivial unless one of the obstructions occurs. \Cref{thm:main-binary} will then complete the proof.

The starting point of the proof is the observation that if we restrict one of the ``rows'' of the $2 \times m$ input matrix to $(f_1,\dots,f_m)$ to a member of $P$, then we obtain a $1$-ary polymorphism, to which the box-triviality assumption applies.
Formally, define $f_i\restfirst{\sigma}, f_i\restsecond{\sigma}\colon \Sigma_i \to \Delta_i$ by
\[
 f_i\restfirst{\sigma}(\tau) = f_i(\sigma,\tau), \qquad
 f_i\restsecond{\sigma}(\tau) = f_i(\tau,\sigma).
\]
For every $(\sigma_1,\dots,\sigma_m) \in P$, both $(f_1\restfirst{\sigma_1},\dots,f_m\restfirst{\sigma_m})$ and $(f_1\restsecond{\sigma_1},\dots,f_m\restsecond{\sigma_m})$ are $1$-ary polymorphisms, and so each of them is either of dictator type or of box type; in the former case, the tuple is a member of $\Phi$.

Here is an outline of the rest of the proof:
\begin{itemize}
\item We show that if all restrictions considered above are of dictator type then there is a (degenerate) generalized upset obstruction.

This is the only part using ideas from the proof of \cite[\improvedunary]{Filmus26b}.

\item We show that if all restrictions to the first argument (or to the second argument) are of box type then either $(f_1,\dots,f_m)$ is $\Phi$-box-trivial or there is a unit witness obstruction.

\item It remains to consider the case in which (without loss of generality) some restriction to the first argument is of dictator type, and another is of box type.
We complete the proof by considering several auxiliary polymorphisms and appealing both to $\Phi$-box-triviality for $n = 1$ and to the result in the preceding step.

The auxiliary polymorphisms that we employ differ between the case in which some alphabet satisfies $|\Sigma_i| \ge 3$ and the case in which all alphabets satisfy $|\Sigma_i| = 2$.
\end{itemize}

We start by considering the case in which all restrictions $(f_1\restfirst{\sigma_1},\dots,f_m\restfirst{\sigma_m}),(f_1\restsecond{\sigma_1},\dots,f_m\restsecond{\sigma_m})$ are of dictator type.

\begin{lemma} \label{lem:all-dictator}
Suppose that $(f_1,\dots,f_m)$ is a $2$-ary polymorphism of $(P,Q)$ such that for all $(\sigma_1,\dots,\sigma_m) \in P$, both $(f_1\restfirst{\sigma_1},\dots,f_m\restfirst{\sigma_m}),(f_1\restsecond{\sigma_1},\dots,f_m\restsecond{\sigma_m})$ are of dictator type.

Then there is a generalized upset obstruction.
\end{lemma}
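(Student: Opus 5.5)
The plan is to extract strong structural information about $P$ from the hypothesis, and then read off the obstruction. The first step is to show that every $f_i$ is a Latin square. For $(\sigma_1,\dots,\sigma_m) \in P$, the hypothesis says that $(f_1\restfirst{\sigma_1},\dots,f_m\restfirst{\sigma_m}) \in \Phi$. Hence each $f_i\restfirst{a}$ is a permutation of $\Sigma_i$; every $a \in \Sigma_i$ occurs in some member of $P$ by full projections. The same holds for $f_i\restsecond{b}$. So each $f_i$ is a Latin square. In particular $f_i\restfirst{a} \neq f_i\restfirst{a'}$ for $a \neq a'$, because the two rows differ in every column.

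The second step is a graph argument, as in the proof of \Cref{lem:main-binary}. Take the graph on vertices $(i,a)$, with an edge between $(i,\sigma_i)$ and $(j,\sigma_j)$ for every $\sigma \in P$. Along an edge, the tuple $(f_1\restfirst{\sigma_1},\dots,f_m\restfirst{\sigma_m}) \in \Phi$ is the same tuple seen from two coordinates. By synchronicity, the tuple in $\Phi$ determined by $f_i\restfirst{a}$ is therefore constant on each connected component. Since distinct $a \in \Sigma_i$ give distinct permutations $f_i\restfirst{a}$, each component contains exactly one vertex $(i,a)$ for each $i$. Hence every $\sigma \in P$ is determined by any one of its coordinates. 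So $P = \{p^1,\dots,p^k\}$, where $k = |\Sigma_i|$ for all $i$ and the $p^j$ differ in every coordinate. Write $\pi_i$ for the resulting bijection $[k] \to \Sigma_i$.

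The third step uses unary polymorphisms to pin down $k$ and $Q$. For any map $\rho\colon [k] \to [k]$, the tuple $g_i(p^j_i) = p^{\rho(j)}_i$ maps $P$ into $P \subseteq Q$, so it is a $1$-ary polymorphism. When $\rho$ is not a permutation it is not of dictator type, since $\Phi$ consists of permutations, and so it must be of box type. Taking $\rho$ with image $[k] \setminus \{j\}$ shows that $Q$ contains every tuple that avoids $p^j$ in all coordinates. Taking $\rho$ with image $\{j,j'\}$ shows that $Q$ contains all mixtures of $p^j$ and $p^{j'}$. I then expect to use $Q \neq \Sigma_1 \times \cdots \times \Sigma_m$, together with non-synchronous coordinatewise relabelings of $P$ that are not in $\Phi$ (these must be box type), to force $k = 2$. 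In that case $P = \{y,\bar y\}$, and I would aim to show that the tuples of $Q$ obey the upset rule with respect to $y$.

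The main obstacle is this last step. Specifically, I need to rule out $k \ge 3$ when $m \ge k$, and to extract the upset closure of $Q \setminus \{y\}$ rather than just facts about $P$. What I would try first is to compose $f$ with the unary maps above, taking $f_i(g_i(\cdot), g'_i(\cdot))$ for suitable unary relabelings $g,g'$. This produces $2$-ary, and after identifying arguments unary, polymorphisms whose images are prescribed two-element sets. Being neither in $\Phi$ nor of box type would then contradict either $\Phi$-box-triviality for $n=1$ or the assumption $Q \subsetneq \Sigma_1 \times \cdots \times \Sigma_m$. If the only surviving configuration is degenerate, with $P = \{y,\bar y\}$ and the closure condition checked on this small $P$, I would verify directly that it matches the definition of the generalized upset obstruction.
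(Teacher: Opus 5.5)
Your first two steps are correct and amount to the paper's first case. The paper argues directly: if $z\neq w$ in $P$ agree at $i$ and differ at $j$, synchronicity forces $f_j\restfirst{z_j}=f_j\restfirst{w_j}$, contradicting injectivity of $f_j\restsecond{z_j}$. So you legitimately reach $P=\{p^1,\dots,p^k\}$ with distinct members disagreeing everywhere. The gap is the third step, which carries the rest of the content and which you only sketch as an expectation.

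To rule out $k\ge 3$, relabelings mapping $P$ onto $P$ are useless. Each is induced by a single permutation of $[k]$, so two of them that agree in one coordinate agree everywhere. Moreover, a tuple of permutations can never be of box type here, so your parenthetical is backwards. The contradiction has to come from two permutation tuples that agree in one coordinate but not another, both being \emph{forced into} $\Phi$.

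The missing observation uses a fact you already proved, $\prod_i(\Sigma_i\setminus\{p^j_i\})\subseteq Q$. It makes every tuple of permutations $(\theta_1,\dots,\theta_m)$ with $\theta_i(p^j_i)=p^j_i$ for all $i$ a $1$-ary polymorphism: it fixes $p^j$ and sends every other member of $P$ into that box. Its images are full and $Q\neq\Sigma_1\times\cdots\times\Sigma_m$, so it lies in $\Phi$. When $k\ge 3$, take $\theta_i=\id$ for $i\neq 2$ and two distinct permutations $\theta_2$ fixing $p^j_2$. This gives two members of $\Phi$ that agree on the first coordinate but differ on the second, contradicting synchronicity. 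The argument works for every $m\ge 2$, with no condition like $m\ge k$.

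For $k=2$, your plan of composing $f$ with relabelings of $P$ cannot deliver the generalized upset property. That property is a statement about every $x\in Q\setminus\{y\}$, and nothing in your construction involves such an $x$. (The outputs of $f$ on $P\times P$ are just values $\phi(\tau)$ with $\phi\in\Phi$, $\tau\in P$.)

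The missing idea is that a unary polymorphism of $(P,Q)$ only has to map $P$ into $Q$, so you may send a member of $P$ to an arbitrary $x\in Q$. Given $x\in Q\setminus\{y\}$, set $\eta_i(y_i)=\bar y_i$ and $\eta_i(\bar y_i)=x_i$. The outputs on $P$ are $\bar y$ and $x$, both in $Q$. Since $x_{i_0}\neq y_{i_0}$ for some $i_0$, the function $\eta_{i_0}$ is constant, so $(\eta_1,\dots,\eta_m)\notin\Phi$ and must be of box type. This gives $\prod_i\{x_i,\bar y_i\}\subseteq Q$, which is exactly the generalized upset obstruction based at $y$.
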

\begin{proof}
The proof considers two cases:
\begin{itemize}
\item There exist two different $z,w \in P$ which agree on some coordinate.

We show that this case leads to a contradiction.
\item Any two different $z,w \in P$ differ on all coordinates.

This is only possible if all alphabets have the same size, and after identifying them, $P = \{ (\sigma,\dots,\sigma) : \sigma \in \Sigma \}$. If $|\Sigma| \ge 3$ then this contradicts $\Phi$-box-triviality for $n=1$, and if $|\Sigma| = 2$ then there is a (degenerate) generalized upset obstruction.
\end{itemize}

\paragraph{\boldmath There exist two different $z,w \in P$ agreeing on some coordinate}
Let $i$ be a coordinate such that $z_i = w_i$, and let $j$ be a coordinate such that $z_j \neq w_j$.

By assumption, $(f_1\restfirst{z_1},\dots,f_m\restfirst{z_m}),(f_1\restfirst{w_1},\dots,f_m\restfirst{w_m}) \in \Phi$.
Since $f_i\restfirst{z_i} = f_i\restfirst{w_i}$, by synchronicity $f_j\restfirst{z_j} = f_j\restfirst{w_j}$, and in particular $f_j(z_j,z_j) = f_j(w_j,z_j)$.
However, this contradicts the assumption $(f_1\restsecond{z_1},\dots,f_m\restsecond{z_m}) \in \Phi$, which implies that $f_j\restsecond{z_j}$ is a permutation.

\paragraph{\boldmath Any two different $z,w \in P$ differ on all coordinates}
Since $P|_1 = \Sigma_1$, for every $\sigma \in \Sigma_1$ there exists $x(\sigma) \in P$ such that $x(\sigma)_1 = \sigma$. By assumption, this is the unique such member of $P$. For any $i > 1$, by assumption the mapping $\pi_i(\sigma) = x(\sigma)_i$ is injective, and so a bijection between $\Sigma_1$ and $\Sigma_i$, since $P|_i = \Sigma_i$; in particular, all $\Sigma_i$ have the same cardinality. Defining $\pi_1 = \id$, this shows that
\[
 P = \{(\pi_1(\sigma), \dots, \pi_m(\sigma)) : \sigma \in \Sigma_1\}.
\]

The proof now splits into two cases, depending on whether $\Sigma_1$ is Boolean:
\begin{itemize}
\item The case $|\Sigma_1| = 2$:
we show that there is a generalized upset obstruction.

In this case $P$ consists of exactly two members $y,\bar{y}$. We show that there is a generalized upset obstruction based at $y$.

Indeed, let $x \in Q$ be different from $y$, say $x_{i_0} \neq y_{i_0}$. Consider the $1$-ary polymorphism $(\eta_1,\dots,\eta_m)$ defined by
\[
 \eta_i(y_i) = \bar{y}_i, \quad \eta_i(\bar{y}_i) = x_i.
\]
This is a polymorphism since the output is either $\bar{y}$ or $x$.

By $\Phi$-box-triviality, $(\eta_1,\dots,\eta_m)$ is either of dictator type or of box type. Since $\eta_{i_0}$ is constant, it must be of box type. Since $\im \eta_i = \{x_i,\bar{y}_i\}$, we see that $Q$ contains all vectors obtained from $x$ by changing coordinates to their value in $\bar{y}$.

\item The case $|\Sigma_1| \ge 3$:
we show that this case leads to a contradiction.

In order to simplify notation, we identify $\Sigma_1,\dots,\Sigma_m$ with a single alphabet $\Sigma$ of the same cardinality in a way that makes
\[
 P = \{(\sigma,\dots,\sigma) : \sigma \in \Sigma\}.
\]

Let $\sigma_0,\sigma_1 \in \Sigma$ be two different symbols. Define a function $g\colon \Sigma \to \Sigma$ as follows: $g(\sigma_0) = \sigma_1$, and $g(\sigma) = \sigma$ if $\sigma \neq \sigma_0$. Clearly $(g,\dots,g)$ is a polymorphism, and so it is either of dictator type or of box type. Since $g$ is not a permutation, $(g,\dots,g)$ must be of box type. Since $\im g = \Sigma \setminus \{\sigma_0\}$, this means that $y \in Q$ whenever $y_i \neq \sigma_0$ for all $i$.

Let $\pi_1,\dots,\pi_m$ be arbitrary permutations of $\Sigma$ fixing $\sigma_0$. The observation above implies that $(\pi_1,\dots,\pi_m)$ is a polymorphism, and so either of dictator type or of box type. Since $\im \pi_i = \Sigma$ for all $i$ and $Q \neq \Sigma^m$, necessarily $(\pi_1,\dots,\pi_m)$ is of dictator type, i.e., $(\pi_1,\dots,\pi_m) \in \Phi$.

Since $|\Sigma| \ge 3$, we can find two different permutations $\pi,\pi'$ that fix $\sigma_0$. The observation above implies that $(\pi,\pi,\dots,\pi),(\pi',\pi,\dots,\pi) \in \Phi$, contradicting synchronicity (since $m \ge 2$). \qedhere
\end{itemize}
\end{proof}

The next step is to consider the case in which all restrictions $(f_1\restfirst{\sigma_1},\dots,f_m\restfirst{\sigma_m})$ are of box type. We will have occasion to apply the following lemma to polymorphisms other than $(f_1,\dots,f_m)$, as well as to restrictions of the second argument rather than the first argument.

\begin{lemma} \label{lem:all-box}
Suppose that $(f_1,\dots,f_m)$ is a $2$-ary polymorphism of $(P,Q)$ such that for all $(\sigma_1,\dots,\sigma_m) \in P$, the polymorphism $(f_1\restfirst{\sigma_1},\dots,f_m\restfirst{\sigma_m})$ is of box type.

Then either $(f_1,\dots,f_m)$ is $\Phi$-box-trivial, or there is a unit witness obstruction. 
\end{lemma}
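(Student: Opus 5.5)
The plan is to exploit the fact that the box-type hypothesis lets us build many $1$-ary polymorphisms by \emph{selection}. Write $A_i(\sigma) = \im f_i\restfirst{\sigma}$. The hypothesis says that for every $(\sigma_1,\dots,\sigma_m) \in P$ we have $A_1(\sigma_1) \times \cdots \times A_m(\sigma_m) \subseteq Q$.

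Consequently, if we choose any functions $c_i\colon \Sigma_i \to \Sigma_i$ with $c_i(\sigma) \in A_i(\sigma)$ for all $\sigma$, then $(c_1,\dots,c_m)$ is a $1$-ary polymorphism of $(P,Q)$. By $\Phi$-box-triviality for $n=1$, each such selection is either in $\Phi$ (so all $c_i$ are permutations) or of box type.

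If $(f_1,\dots,f_m)$ is of box type we are done. Otherwise, as in the proof of \Cref{lem:main-binary}, fix a witness $\omega \notin Q$ with $\omega_i \in A_i(\alpha_i)$. Any selection with $c_i(\alpha_i) = \omega_i$ for all $i$ contains $\omega$ in the product of its images, so it is not of box type. Hence it lies in $\Phi$, and every $c_i$ is a permutation.

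From this I would derive strong structure on the sets $A_i(\sigma)$.
\begin{itemize}
\item \emph{Off the witness point.} Suppose $\sigma \neq \alpha_i$ and $|A_i(\sigma)| \ge 2$. Changing $c_i(\sigma)$ to another value $b$, while keeping $c_i(\alpha_i) = \omega_i$, gives a second selection that must also be a permutation. But $b$ is already the image under $c_i$ of some other point, so the new selection is not injective, a contradiction. Thus $A_i(\sigma)$ is a singleton $\{c_i(\sigma)\}$ for all $\sigma \neq \alpha_i$.
\item \emph{All singletons.} If moreover every $A_i(\alpha_i) = \{\omega_i\}$, then $f_i(\sigma,\tau) = c_i(\sigma)$ with $(c_1,\dots,c_m) \in \Phi$. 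So $(f_1,\dots,f_m)$ is of dictator type on the first coordinate, and we are done.
\item \emph{Remaining case.} Otherwise some $A_i(\alpha_i)$ contains a value $a \neq \omega_i$. Switching the selection at $\alpha_i$ to $a$ yields a non-injective selection, because $a = c_i(\sigma')$ for some $\sigma' \neq \alpha_i$. This selection therefore must be of box type. Its images are $\Sigma_j$ for $j \neq i$ and $\Sigma_i \setminus \{\omega_i\}$ for coordinate $i$. Hence $Q$ contains every $y$ with $y_i \neq \omega_i$.
\end{itemize}

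If $|\Sigma_i| = 2$, the last conclusion is exactly a unit witness obstruction, with $b = \bar{\omega}_i$.

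The main obstacle is the case $|\Sigma_i| \ge 3$, where I must instead reach a contradiction. My first attempt would use the second-argument restrictions, which are also $1$-ary polymorphisms. By full projections, choose $\tau \in P$ with $f_i(\alpha_i,\tau_i) = a$. Then $f_i\restsecond{\tau_i}$ sends both $\alpha_i$ and $\sigma'$ to $a$, so it is not a permutation, and $(f_1\restsecond{\tau_1},\dots,f_m\restsecond{\tau_m})$ must be of box type. For $j$ with $A_j$ everywhere a singleton, its $j$-th image is all of $\Sigma_j$.

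I would then try to choose $\tau$ so that the $i$-th image still contains $\omega_i$ (possible when $|\Sigma_i| \ge 3$, since $c_i$ hits $\omega_i$ only at $\alpha_i$; I would use another point mapping outside $\{a\}$ together with the freedom at other coordinates). The aim is to force $\omega$, or some other tuple outside $Q$, into a box. The tuples outside $Q$ all have $i$-th coordinate $\omega_i$, which should make this bookkeeping manageable. If that fails, the fallback is to perturb the selection at two coordinates simultaneously, using that $|\Sigma_i| \ge 3$ leaves room to keep $\omega_i$ in the image while breaking injectivity, and then contradict the box conclusion.
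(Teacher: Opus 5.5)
Your argument is correct up to the derivation of $\{y : y_i \neq \omega_i\} \subseteq Q$, and it follows the paper closely there. The paper forces $f_i\restfirst{\sigma}$ to be constant for $\sigma \neq \alpha_i$ via synchronicity, whereas you use injectivity of the permutation $c_i$; both work. The case $|\Sigma_i| = 2$ is also right. The gap is the case $|\Sigma_i| \geq 3$: you leave it open, and neither route you sketch can close it.

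For $\tau \in P$ with $f_i(\alpha_i,\tau_i) = a$, the $i$th image of $f_i\restsecond{\tau_i}$ is $\{c_i(\sigma) : \sigma \neq \alpha_i\} \cup \{a\} = \Sigma_i \setminus \{\omega_i\}$. This is because $f_i(\sigma,\cdot) \equiv c_i(\sigma)$ for $\sigma \neq \alpha_i$, and $c_i$ takes the value $\omega_i$ only at $\alpha_i$. So $\omega_i$ is never in that image, and the box lies inside $\{y : y_i \neq \omega_i\} \subseteq Q$. More generally, every second-argument restriction of $f$ is now one of two things. Either it equals $(c_1,\dots,c_m) \in \Phi$, or it has a side $\Sigma_j \setminus \{\omega_j\}$ for some coordinate $j$ that enjoys the same closure property. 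In both cases it is box-trivial and yields nothing. Your fallback fails for the same reason: among selections, the only freedom at coordinate $i$ is the value at $\alpha_i$, and keeping $\omega_i$ in the image forces $c_i$ itself.

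What is missing is a $1$-ary polymorphism that is not a selection from $f$ but is licensed by the new closure property. The paper takes the transposition $h_i$ of two symbols in $\Sigma_i \setminus \{\omega_i\}$. Then $(\id,\dots,h_i,\dots,\id)$ is a polymorphism: inputs $x \in P$ with $x_i = \omega_i$ are fixed and lie in $P \subseteq Q$, and all other inputs land in $\{y : y_i \neq \omega_i\}$. Its images are full while $Q$ is not, so it lies in $\Phi$. So does $(\id,\dots,\id)$, and since $m \geq 2$ these two tuples contradict synchronicity.

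A variant closer to your setup: fix $\beta \neq \omega_i$, and set $g_j = c_j$ for $j \neq i$, $g_i(\alpha_i) = \omega_i$, and $g_i(\sigma) = \beta$ for $\sigma \neq \alpha_i$. On $x \in P$, this outputs $(c_1(x_1),\dots,c_m(x_m)) \in Q$ if $x_i = \alpha_i$, and a tuple with $i$th coordinate $\beta \neq \omega_i$ otherwise, so it is a polymorphism. It is not of dictator type, since $g_i$ is not injective when $|\Sigma_i| \geq 3$. Box type would put every $y$ with $y_i = \omega_i$ into $Q$, which together with the closure property gives $Q = \Sigma_1 \times \cdots \times \Sigma_m$, a contradiction.
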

\begin{proof}
If $(f_1,\dots,f_m)$ is of box type then we are done. Otherwise, there exists $(\omega_1,\dots,\omega_m) \notin Q$ such that $\omega_i \in \im f_i$ for all $i$, say $\omega_i \in \im f_i\restfirst{\alpha_i}$.

Suppose that the functions $g_1,\dots,g_m$, where $g_i\colon \Sigma_i \to \Sigma_i$, satisfy the following conditions for all $i$:
\begin{enumerate}[(i)]
\item $g_i(\alpha_i) = \omega_i$.
\item $g_i(\sigma) \in \im f_i\restfirst{\sigma}$ for all $\sigma \neq \alpha_i$.
\end{enumerate}
We claim that this implies that $(g_1,\dots,g_m) \in \Phi$.

To see this, let us show first that $(g_1,\dots,g_m)$ is a polymorphism (in fact, this holds under the weaker assumption $g_i(\alpha_i) \in \im f_i\restfirst{\alpha_i}$). Given $(\sigma_1,\dots,\sigma_m) \in P$, observe that $g_i(\sigma_i) \in \im f_i\restfirst{\sigma_i}$ for all $i$, and so $(g_1(\sigma_1),\dots,g_m(\sigma_m)) \in Q$ since $(f_1\restfirst{\sigma_1},\dots,f_m\restfirst{\sigma_m})$ is of box type. By $\Phi$-box-triviality for $n=1$, we see that either $(g_1,\dots,g_m) \in \Phi$ or $(g_1,\dots,g_m)$ is of box type; but the latter is impossible since $\omega_i \in \im g_i$ for all $i$, and $(\omega_1,\dots,\omega_m) \notin Q$.

The observation above implies that $f_i\restfirst{\sigma}$ is constant for all $i,\sigma \neq \alpha_i$. Indeed, suppose that this fails for some $i,\sigma \neq \alpha_i$. Then we can construct two different functions $g_i,g'_i$ satisfying the constraints above. Let $g_j$ for $j \neq i$ also satisfy the above constraints. Then $(g_1,\dots,g_i,\dots,g_m),(g_1,\dots,g'_i,\dots,g_m) \in \Phi$, contradicting synchronicity (since $m \ge 2$).

We complete the proof by considering two cases:
\begin{itemize}
\item $f_i\restfirst{\alpha_i}$ is constant for all $i$: we show that $f_1,\dots,f_m$ is of dictator type.

Indeed, in this case $f_1,\dots,f_m$ only depend on the first coordinate. Choosing $(\sigma_1,\dots,\sigma_m) \in P$ arbitrarily, $(f_1,\dots,f_m)$ essentially coincides with the polymorphism $(f_1\restsecond{\sigma_1},\dots,f_m\restsecond{\sigma_m})$. By $\Phi$-box-triviality for $n = 1$, the latter is either of dictator type or of box type. In the former case, $(f_1,\dots,f_m)$ is also of dictator type. The latter case cannot happen since $\omega_i \in \im f_i = \im f_i\restsecond{\sigma_i}$ for all $i$ and $(\omega_1,\dots,\omega_m) \notin Q$.

\item $f_i\restfirst{\alpha_i}$ is non-constant for some $i = i_0$: we show that there is a unit witness obstruction.

Recall the functions $(g_1,\dots,g_m)$ defined above; we have shown above that $(g_1,\dots,g_m) \in \Phi$. Let $g'_{i_0}$ be obtained from $g_{i_0}$ by modifying $g_{i_0}(\alpha_{i_0})$ to any value in $\im f_{i_0}\restfirst{\alpha_{i_0}} \setminus \{\omega_{i_0}\}$ (such a value exists by assumption). As shown above, $(g_1,\dots,g'_{i_0},\dots,g_m)$ is also a polymorphism, and so by $\Phi$-box-triviality for $n=1$, it is either of dictator type or of box type. 

Since $(g_1,\dots,g_m) \in \Phi$, it follows that $g_{i_0}$ is a permutation, and so $g'_{i_0}$ is not, hence $(g_1,\dots,g'_{i_0},\dots,g_m) \notin \Phi$. It follows that $(g_1,\dots,g'_{i_0},\dots,g_m)$ must be of box type. Since $\im g_i = \Sigma_i$ for $i \neq i_0$ while $\im g'_{i_0} = \Sigma_{i_0} \setminus \{\omega_{i_0}\}$, this shows that $y \in Q$ whenever $y_{i_0} \neq \omega_{i_0}$.

If $|\Sigma_{i_0}| = 2$ then $y \in Q$ whenever $y_{i_0} = \bar{\omega}_{i_0}$, and so there is a unit witness obstruction.

We complete the proof by ruling out the case $|\Sigma_{i_0}| \ge 3$. Let $\alpha,\beta \in \Sigma_{i_0} \setminus \{\omega_{i_0}\}$ be two different symbols, and let $h_{i_0}$ be the transposition that swaps $\alpha$ and $\beta$. It is easy to check that $(\id,\dots,h_{i_0},\dots,\id)$ is a polymorphism, which by $\Phi$-box-triviality for $n = 1$ must belong to $\Phi$ (since $Q \neq \Sigma_1 \times \cdots \times \Sigma_m$ rules out box type). Similarly, $(\id,\dots,\id) \in \Phi$. Since $m \ge 2$, this contradicts synchronicity. \qedhere
\end{itemize}
\end{proof}

The preceding two lemmas allow us to assume (up to switching the order of coordinates) that some restriction $(f_1\restfirst{\sigma_1},\dots,f_m\restfirst{\sigma_m})$ is of box type and another $(f_1\restfirst{\tau_1},\dots,f_m\restfirst{\tau_m})$ is of dictator type. These cases are mutually exclusive since all members of $\Phi$ consist of permutations, and $Q \neq \Sigma_1 \times \cdots \times \Sigma_m$.

The remainder of the proof depends on whether all alphabets are Boolean (have size $2$), or at least one is non-Boolean.
We start with the case in which some alphabet is non-Boolean.

\begin{lemma} \label{lem:some-non-boolean}
Suppose that $(f_1,\dots,f_m)$ is a $2$-ary polymorphism of $(P,Q)$ such that for some $(\sigma_1,\dots,\sigma_m),\allowbreak(\tau_1,\dots,\tau_m) \in P$, the polymorphism $(f_1\restfirst{\sigma_1},\dots,f_m\restfirst{\sigma_m})$ is of box type and $(f_1\restfirst{\tau_1},\dots,f_m\restfirst{\tau_m}) \in \Phi$.

Assume that $|\Sigma_{i_0}| \ge 3$ for some $i_0$.

Then either $(f_1,\dots,f_m)$ is of box type, or there is a unit witness obstruction.
\end{lemma}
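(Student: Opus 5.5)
The plan is to reduce to \Cref{lem:all-box}, this time applied to restrictions of the \emph{second} argument. The extra room provided by $|\Sigma_{i_0}| \ge 3$ will be used to force enough of the structure of $Q$ to be box-like.

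Write $\phi = (f_1\restfirst{\tau_1},\dots,f_m\restfirst{\tau_m}) \in \Phi$. Each $\phi_i$ is a permutation. Also write $B_i = \im f_i\restfirst{\sigma_i}$, so that $B_1 \times \cdots \times B_m \subseteq Q$.

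\textbf{Step 1: second-argument restrictions.} For every $\rho \in P$, consider $h^\rho = (f_1\restsecond{\rho_1},\dots,f_m\restsecond{\rho_m})$. By $\Phi$-box-triviality for $n=1$, this tuple is either in $\Phi$ or of box type. We know $h^\rho_i(\tau_i) = \phi_i(\rho_i)$ and $h^\rho_i(\sigma_i) \in B_i$. The aim is to show that \emph{every} $h^\rho$ is of box type; \Cref{lem:all-box} (with the roles of the arguments swapped) then gives that $(f_1,\dots,f_m)$ is either $\Phi$-box-trivial or there is a unit witness obstruction. Dictator type of $(f_1,\dots,f_m)$ itself is excluded: a dictator on the first coordinate would make $f\restfirst{\sigma}$ constant, contradicting $f\restfirst{\tau} \in \Phi$. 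A dictator on the second coordinate would make $f\restfirst{\sigma}$ a member of $\Phi$ rather than of box type. These two cases are mutually exclusive, as noted before the lemma. So the conclusion of \Cref{lem:all-box} reduces to exactly what we want.

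\textbf{Step 2: large boxes in $Q$ from $|\Sigma_{i_0}|\ge 3$.} To rule out $h^\rho \in \Phi$, I would build non-bijective unary polymorphisms that interpolate between $\phi$ and the box restriction. Concretely, take $g_j = \phi_j$ for $j \neq i_0$. Let $g_{i_0}$ agree with $\phi_{i_0}$ except on one symbol, where it takes a value from $B_{i_0}$. Check via rows of $P$ that this is a polymorphism, mixing the first-argument restrictions at $\tau$ and at $\sigma$. Such a $g$ cannot lie in $\Phi$, because $g_{i_0}$ is not a permutation. Hence it is of box type. This gives $Q \supseteq \prod_{j\ne i_0}\Sigma_j \times (\Sigma_{i_0}\setminus\{\text{one symbol}\})$. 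Varying the collapsed symbol, which needs $|\Sigma_{i_0}| \ge 3$ so that the missing symbols can be chosen to be different, either yields $Q = \Sigma_1\times\cdots\times\Sigma_m$, which is a contradiction, or pins down a single forbidden value $\omega_{i_0}$ in coordinate $i_0$. In the latter case, exactly as at the end of the proof of \Cref{lem:all-box}, a transposition of two symbols other than $\omega_{i_0}$ yields two members of $\Phi$ that differ only in coordinate $i_0$, contradicting synchronicity.

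\textbf{Main obstacle.} The delicate point is Step 2: verifying that the mixed unary tuple is genuinely a polymorphism. A row $\rho \in P$ uses $\tau$-information in some coordinates and $\sigma$-information in others. Box type of $f\restfirst{\sigma}$ controls only those coordinates in which every value lies in $B_i$, whereas coordinates where $\phi$ was used need the whole row to come from a single dictator. If the direct mixing fails, I would fall back on a different auxiliary tuple: compose $f$ with itself, taking $f_i(f_i\restfirst{\sigma_i}(\cdot),\cdot)$-type constructions, to produce a $2$-ary polymorphism all of whose first-argument restrictions are of box type. I would then apply \Cref{lem:all-box} to that polymorphism, transferring the conclusion back through the images $B_i$.
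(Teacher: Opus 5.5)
\paragraph{Verdict.} Your Step~2 has a genuine gap, and it cannot be patched locally.

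\paragraph{Why Step~2 fails.} Take a row $\rho\in P$ with $\rho_{i_0}=s$, the altered symbol. Then $g(\rho)$ is $\phi(\rho)$ with coordinate $i_0$ replaced by some $b\in B_{i_0}$, and nothing you have controls this vector:
\begin{enumerate}[(i)]
\item $\phi(\rho)\in Q$ says nothing about modified vectors.
\item Box type of $f\restfirst{\sigma}$ only covers vectors all of whose coordinates lie in the $B_j$, and $\phi_j(\rho_j)$ typically does not.
\item $g(\rho)$ is not an output $(f_1(x_1,t_1),\dots,f_m(x_m,t_m))$ with $x,t\in P$.
\end{enumerate}

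\paragraph{A structural reason no argument of this shape can work.} Under the hypotheses, $f$ is never of box type, since $\im f_i\supseteq\im\phi_i=\Sigma_i$. So the lemma really asserts that a unit witness obstruction exists, and a proof must derive a contradiction \emph{from its absence}. Step~2 uses neither that assumption nor a witness $\omega\notin Q$ with $\omega_i\in\im f_i$. If it worked, it would show the hypotheses are never satisfiable. They are satisfiable:
\begin{itemize}
\item Let $R\subseteq\{0,1,2\}^3$ be the relation from the introduction. A short case check shows its only non-box unary polymorphism is $(\id,\id,\id)$.
\item Set $P=Q=\{0\}\times R\cup\{1\}\times\{0,1,2\}^3$ and $\Phi=\{(\id,\id,\id,\id)\}$. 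Then $(P,Q)$ is $\Phi$-box-trivial for $n=1$, with a unit witness obstruction at coordinate~$1$.
\item Let $f_1(u,v)=0$ if $u=v=0$ and $f_1(u,v)=1$ otherwise, and $f_j(u,v)=v$ for $j\ge 2$.
\end{itemize}
Then $f\restfirst{\tau}=(\id,\dots,\id)$ when $\tau_1=0$, $f\restfirst{\sigma}$ is of box type when $\sigma_1=1$, and $i_0=2$. For every $s\neq s'$, your tuple (identity except $s\mapsto s'$ in coordinate~$2$) sends some $(0,r)\in P$ outside $Q$, for a suitable $r\in\{(0,0,0),(1,1,0),(1,2,0),(2,2,2)\}$.

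\paragraph{The missing idea.} You need an argument that consumes both $\omega$ and the absence of a unit witness obstruction. The paper proceeds in four steps:
\begin{enumerate}
\item It introduces the auxiliary $2$-ary polymorphism $h_i(a,b)=b$ if $f_i\restfirst{a}$ is a permutation, and $h_i(a,b)=f_i(a,b)$ otherwise. This $h$ is not $\Phi$-box-trivial.
\item Applying \Cref{lem:all-box} to second-argument restrictions (this is where the no-unit-witness assumption enters) yields $y\in P$ with $(h_1\restsecond{y_1},\dots,h_m\restsecond{y_m})\in\Phi$. This forces $\tau$ to be the only row whose first-argument restriction is a dictator.
\item A unary polymorphism $\eta$ then gives a closure property: for $z\in P\setminus\{y\}$, $Q$ contains every $w$ with $z_i\neq y_i\Rightarrow w_i\neq y_i$.
\item Finally, let $\chi_i$ fix $y_i$ and send every other symbol to $\omega_i$ (or to some $\kappa_i\neq\omega_i$ if $y_i=\omega_i$). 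This is a unary polymorphism. It is not of box type because of $\omega$, and it is not a dictator because $|\im\chi_{i_0}|\le 2<|\Sigma_{i_0}|$. This contradicts $\Phi$-box-triviality for $n=1$.
\end{enumerate}

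Your fallback of composing $f$ with itself is too vague to assess: it does not say how $\omega$ and the no-unit-witness assumption would combine into a contradiction.
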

\begin{proof}
Assuming that $(f_1,\dots,f_m)$ is not of box type and that there is no unit witness obstruction, we need to reach a contradiction.
The proof is composed of two parts:
\begin{enumerate}
\item We find $y \in P$ such that for all $z \in P$ different from $y$, the following holds: if $w$ is such that $z_i \neq y_i \to w_i \neq y_i$ for all $i$, then $w \in Q$.
\item We use this information together with the assumption $|\Sigma_{i_0}| \ge 3$ to construct a $1$-ary polymorphism which is not $\Phi$-box-trivial.
\end{enumerate}

We find $y$ by considering a certain $2$-ary polymorphism $(h_1,\dots,h_m)$, and we show that it satisfies the claimed closure property by considering a $1$-ary polymorphism $(\eta_1,\dots,\eta_m)$ whose definition depends on $z$.

\paragraph{\boldmath First part: finding $y$} 
The $2$-ary polymorphism $(h_1,\dots,h_m)$ is defined as follows:
\[
 h_i(a,b) =
 \begin{cases}
     b & \text{if } f_i\restfirst{a} \text{ is a permutation}, \\
     f_i(a,b) & \text{otherwise}.
 \end{cases}
\]

To check that this is indeed a polymorphism, it suffices to show that $(h_1\restfirst{x_1},\dots,h_m\restfirst{x_m})$ is a polymorphism for all $x \in P$. Since $(f_1\restfirst{x_1},\dots,f_m\restfirst{x_m})$ is a polymorphism, by $\Phi$-box-triviality for $n = 1$, $(f_1\restfirst{x_1},\dots,f_m\restfirst{x_m})$ is either of dictator type or of box type. In the first case, $(h_1\restfirst{x_1},\dots,h_m\restfirst{x_m}) = (\id,\dots,\id)$ is clearly a polymorphism. In the latter case, $\im h_i\restfirst{x_i} = \im f_i\restfirst{x_i}$ for all $i$, and so $(h_1\restfirst{x_1},\dots,h_m\restfirst{x_m})$ is also of box type and so a polymorphism.

\Cref{lem:all-box} (applied to the second coordinate) shows that either $(h_1\restsecond{y_1},\dots,h_m\restsecond{y_m})$ is of dictator type for some $y \in P$, or $(h_1,\dots,h_m)$ is $\Phi$-box-trivial (we assumed that there is no unit witness obstruction).
We would like to rule out the possibility that $(h_1,\dots,h_m)$ is $\Phi$-box-trivial.

Since $(f_1\restfirst{\tau_1},\dots,f_m\restfirst{\tau_m}) \in \Phi$, it follows that $h_i\restfirst{\tau_i} = \id$ for all $i$. This shows that $\im h_i = \Sigma_i$, ruling out the possibility that $(h_1,\dots,h_m)$ is of box type (since $Q \neq \Sigma_1 \times \cdots \times \Sigma_m$). It also shows that all $h_i$ depend on the second coordinate.

In order to complete the proof that $(h_1,\dots,h_m)$ is not $\Phi$-box-trivial, it remains to show that some $h_i$ depends on the first coordinate. We do this by considering $(f_1\restfirst{\sigma_1},\dots,f_m\restfirst{\sigma_m})$, which is of box type. Since $Q \neq \Sigma_1 \times \cdots \times \Sigma_m$, there must be $i$ such that $\im f_i\restfirst{\sigma_i} \neq \Sigma_i$, say $\kappa \notin \im f_i\restfirst{\sigma_i}$. In particular, $f_i\restfirst{\sigma_i}$ is not a permutation.
Then $h_i(\tau_i,\kappa) = \kappa$ whereas $h_i(\sigma_i,\kappa) = f_i(\sigma_i,\kappa) \neq \kappa$, showing that $h_i$ depends on the first coordinate.

Summarizing, we have shown that $(h_1\restsecond{y_1},\dots,h_m\restsecond{y_m}) \in \Phi$ for some $y \in P$.
This has the following implication: if $h_i\restfirst{a} = \id$ then $a = \tau_i$. Indeed, $h_i\restfirst{\tau_i} = \id$ since $(h_1\restfirst{\tau_1},\dots,h_m\restfirst{\tau_m}) \in \Phi$, and so $h_i(a,y_i) = h_i(\tau_i,y_i) = y_i$. Since $h_i\restsecond{y_i}$ is a permutation, this implies that $a = \tau_i$.

When proving that $(h_1,\dots,h_m)$ is a polymorphism, we showed that for every $x \in P$, either $h_i\restfirst{x_i} = \id$ for all $i$, in which case $x_i = \tau_i$ for all $i$, or $(h_1\restfirst{x_1},\dots,h_m\restfirst{x_m})$ is of box type.
The upshot is that if $x \in P$ is different from $(\tau_1,\dots,\tau_m)$ then $(h_1\restfirst{x_1},\dots,h_m\restfirst{x_m})$ is of box type.

\paragraph{First part: proving the closure property}
Let $z \in P$ be different from $y$. We need to show that $w \in Q$ whenever $z_i \neq y_i \to w_i \neq y_i$ for all $i$. 

To this end, we consider the $1$-ary polymorphism $(\eta_1,\dots,\eta_m)$ defined as follows:
\[
 \eta_i(a) =
 \begin{cases}
     z_i & \text{if } a = \tau_i, \\
     h_i(a, y_i) & \text{otherwise}.
 \end{cases}
\]
This is a polymorphism since $(\eta_1(\tau_1),\dots,\eta_m(\tau_m)) = z \in P$, and if $x \neq (\tau_1,\dots,\tau_m)$, then $(h_1\restfirst{x_1},\dots,h_m\restfirst{x_m})$ is of box type.

By $\Phi$-box-triviality for $n = 1$, we see that either $(\eta_1,\dots,\eta_m) \in \Phi$ or $(\eta_1,\dots,\eta_m)$ is of box type. We can rule out the former case by showing that $\eta_i$ is not a permutation for any coordinate $i$ such that $z_i \neq y_i$. Indeed, recall that $h_i\restsecond{y_i}$ is a permutation. We obtain $\eta_i$ by changing its value at $\tau_i$ to $z_i$. Since $h_i(\tau_i,y_i) = y_i \neq z_i$, we deduce that $\eta_i$ is indeed not a permutation.

We conclude that $(\eta_1,\dots,\eta_m)$ is of box type. As we have just mentioned, $\eta_i$ is obtained from the permutation $h_i\restsecond{y_i}$ by changing the value at $\tau_i$ from $y_i$ to $z_i$. Hence $\im \eta_i = \Sigma_i$ if $z_i = y_i$, and $\im \eta_i = \Sigma_i \setminus \{y_i\}$ otherwise. The closure property now follows from the definition of box type.

\paragraph{Second part}
We complete the proof by considering the $1$-ary polymorphism $(\chi_1,\dots,\chi_m)$ defined as follows:
\[
 \chi_i(a) =
 \begin{cases}
     y_i & \text{if } a = y_i, \\
     \omega_i & \text{if } a \neq y_i \text{ and } y_i \neq \omega_i, \\
     \kappa_i & \text{if } a \neq y_i \text{ and } y_i = \omega_i, \text{for an arbitrary } \kappa_i \neq \omega_i.
 \end{cases}
\]
This is a polymorphism since $(\chi_1(y_1),\dots,\chi_m(y_m)) = y \in P$, and if $z \neq y$ then $(\chi_1(z_1),\dots,\chi_m(z_m))$ is obtained from $(z_1,\dots,z_m)$ by changing coordinates $z_i$ such that $z_i \neq y_i$ to some value different from $y_i$.

We complete the proof by contradiction by showing that $(\chi_1,\dots,\chi_m)$ is not $\Phi$-box-trivial, contradicting $\Phi$-box-triviality for $n = 1$. It is not of dictator type since $\chi_{i_0}$ is not a permutation (recall that $|\Sigma_{i_0}| \ge 3$). It is not of box type since $\omega_i \in \im \chi_i$ for all $i$, and $(\omega_1,\dots,\omega_m) \notin Q$.
\end{proof}

We complete the proof by considering the case in which all alphabets are Boolean.

\begin{lemma} \label{lem:all-boolean}
Suppose that $(f_1,\dots,f_m)$ is a $2$-ary polymorphism of $(P,Q)$ such that for some $(\sigma_1,\dots,\sigma_m),\allowbreak(\tau_1,\dots,\tau_m) \in P$, the polymorphism $(f_1\restfirst{\sigma_1},\dots,f_m\restfirst{\sigma_m})$ is of box type and $(f_1\restfirst{\tau_1},\dots,f_m\restfirst{\tau_m}) \in \Phi$.

Assume that $|\Sigma_i| = 2$ for all $i$.

Then either $(f_1,\dots,f_m)$ is $\Phi$-box-trivial, or there is a unit witness obstruction, or there is a generalized upset obstruction.
\end{lemma}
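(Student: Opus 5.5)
I plan to follow the structure of the proof of \Cref{lem:some-non-boolean} as closely as possible, and to depart from it only in the second part, where the assumption $|\Sigma_{i_0}| \ge 3$ was used. Assume that $(f_1,\dots,f_m)$ is not $\Phi$-box-trivial and that there is no unit witness obstruction. The aim is to exhibit a generalized upset obstruction.

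The first part of the proof of \Cref{lem:some-non-boolean} never used $|\Sigma_{i_0}| \ge 3$, so I reuse it verbatim. Define the auxiliary $2$-ary polymorphism $(h_1,\dots,h_m)$ by replacing each permutation restriction $f_i\restfirst{a}$ with the identity. As there, $h_i\restfirst{\tau_i} = \id$ rules out box type. The coordinate $i$ with $\kappa \notin \im f_i\restfirst{\sigma_i}$ shows that $(h_1,\dots,h_m)$ is not of dictator type. \Cref{lem:all-box} applied to the second coordinate then produces $y \in P$ with $(h_1\restsecond{y_1},\dots,h_m\restsecond{y_m}) \in \Phi$. Running the $\eta$ argument then gives the closure property: for every $z \in P \setminus \{y\}$, $Q$ contains every $w$ satisfying $z_i \neq y_i \to w_i \neq y_i$. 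In the Boolean setting this reads as follows: $Q$ contains every vector obtained from $z$ by changing coordinates to their value in $\bar y$.

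The remaining task is to extend this closure property from $z \in P\setminus\{y\}$ to all $x \in Q \setminus \{y\}$, which is exactly the generalized upset obstruction. Given $x \in Q\setminus\{y\}$, I would take the $1$-ary map $\eta^x$ defined by $\eta^x_i(y_i) = y_i$ and $\eta^x_i(\bar y_i) = x_i$, or a variant of it. I would first check that $\eta^x$ is a polymorphism of $(P,Q)$. The point $y$ maps to $y \in P \subseteq Q$. Any $z \in P\setminus\{y\}$ maps to a vector agreeing with $z$ where $z_i = y_i$ and with $x_i$ elsewhere, so I need to verify that this vector lies in $Q$. This is unclear from the closure property alone, so I expect to instead use the map $\eta_i(y_i) = \bar y_i$, $\eta_i(\bar y_i) = x_i$ as in \Cref{lem:all-dictator}, or a map that sends $y$ to $y$ and everything else into the upward closure.

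Once a suitable $1$-ary polymorphism is found, $\Phi$-box-triviality for $n=1$ forces it to be either in $\Phi$ or of box type. Coordinates where $x_i = y_i$ make the map non-injective, so box type is forced, and box type gives exactly the vectors obtained from $x$ by flipping coordinates to $\bar y$. The degenerate case $x = \bar y$ is trivial.

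The main obstacle is making this last step work: choosing the $1$-ary polymorphism so that it is genuinely a polymorphism for arbitrary $x \in Q$, not only $x \in P$. If the direct choice fails, I would fall back on a different argument. In that case I would use the witness $\omega \notin Q$ together with $h$ to show that either $\omega$ contradicts the closure property, which makes $(f_1,\dots,f_m)$ box-trivial, or that $Q\setminus\{y\}$ is generated by $P\setminus\{y\}$ under flipping towards $\bar y$. I would prove the latter by building, for each $x\in Q$, a $2$-ary polymorphism in the style of the $\chi$ from the generalized upset obstruction, and applying \Cref{lem:all-box} to it.
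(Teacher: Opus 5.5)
You have the first part right. For Boolean alphabets your $h$ coincides with the paper's $h_i(a,b)=f_i(a,f_i(a,b))$, and nothing in the first part of the proof of \Cref{lem:some-non-boolean} uses $|\Sigma_{i_0}|\ge 3$. So it does give you $y\in P$ such that for every $z\in P\setminus\{y\}$, every $w$ with $z_i=\bar y_i\Rightarrow w_i=\bar y_i$ lies in $Q$.

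The gap is the final step. You flag it as the main obstacle but do not resolve it, and none of the maps you name works.
\begin{itemize}
\item Your first candidate $\eta^x_i(y_i)=y_i$, $\eta^x_i(\bar y_i)=x_i$ has image $\{y_i,x_i\}$. Even if it were of box type, it would only give flips of $x$ towards $y$, which is the wrong direction. It is also in general not a polymorphism. Take $P=Q=\{000,111,011,101,110\}$, which satisfies the generalized upset property at $y=000$, and $x=011$: the map sends $101\in P$ to $001\notin Q$.
\item Your second candidate $\eta_i(y_i)=\bar y_i$, $\eta_i(\bar y_i)=x_i$ puts $x_i$ exactly on the coordinates where $z_i=\bar y_i$. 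Those are precisely the coordinates on which your closure property demands the value $\bar y_i$. So checking that it is a polymorphism already requires the generalized upset property you are trying to prove.
\item The fallback sketch does not identify any concrete polymorphism.
\end{itemize}

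The missing idea is to send the base point to $x$ and everything else towards $\bar y$: set $\eta_i(y_i)=x_i$ and $\eta_i(\bar y_i)=\bar y_i$.
\begin{itemize}
\item Then $\eta(y)=x\in Q$.
\item For $z\in P\setminus\{y\}$, the vector $w=\eta(z)$ satisfies $w_i=\bar y_i$ whenever $z_i=\bar y_i$, so $w\in Q$ by your closure property. Hence $\eta$ is a $1$-ary polymorphism.
\item Since $x\neq y$, some coordinate has $x_{i_0}=\bar y_{i_0}$. So $\eta_{i_0}$ is constant and $\eta\notin\Phi$; note the non-injective coordinate is one with $x_i\neq y_i$, not $x_i=y_i$ as you wrote.
\item $\Phi$-box-triviality for $n=1$ then forces box type, i.e.\ $\prod_i\{x_i,\bar y_i\}\subseteq Q$. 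This is exactly the generalized upset property at $y$, and no separate case $x=\bar y$ is needed.
\end{itemize}

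This matches the paper's last step up to indexing. The paper reads off $h_i\restfirst{\bar\tau_i}\equiv\bar y_i$ to get a closure property based at $\tau$: for $z\in P\setminus\{\tau\}$, $w\in Q$ whenever $z_i=\bar\tau_i\Rightarrow w_i=\bar y_i$. It then uses $\eta_i(\tau_i)=x_i$, $\eta_i(\bar\tau_i)=\bar y_i$. Your $y$-based closure property, obtained by rerunning the $\eta$-argument of \Cref{lem:some-non-boolean}, works equally well once the final map is keyed to $y$.
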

\begin{proof}
The overall plan is similar to that of the proof of \Cref{lem:some-non-boolean}, though the details are different. Assuming that $(f_1,\dots,f_m)$ is not of box type and that there is no unit witness obstruction, we show that there is a generalized upset obstruction.

We find the ``direction'' $y \in P$ in the definition of the generalized upset obstruction by considering an appropriate $2$-ary polymorphism and appealing to \Cref{lem:all-box}; this polymorphism is different from the one considered in the proof of \Cref{lem:some-non-boolean}. We then prove the generalized upset property.

\paragraph{\boldmath Finding $y$}
Consider the $2$-ary polymorphism $(h_1,\dots,h_m)$ defined as follows:
\[
 h_i(a,b) = f_i(a,f_i(a,b)).
\]

To check that this is indeed a polymorphism, consider any $x,t \in P$. Since $(f_1\restfirst{x_1},\dots,f_m\restfirst{x_m})$ is a polymorphism, by $\Phi$-box-triviality for $n = 1$, $(f_1\restfirst{x_1},\dots,f_m\restfirst{x_m})$ is either of dictator type or of box type. In the former case, $f_i\restfirst{x_i}$ is a permutation for all $i$, hence $f_i(x_i,f_i(x_i,t_i)) = t_i$ (since $f_i\restfirst{x_i}$ is either the identity or the transposition swapping the two elements of $\Sigma_i$). This shows that $(h_1(x_1,t_1),\dots,h_m(x_m,t_m)) \in P$. In the latter case, $(h_1(x_1,t_1),\dots,h_m(x_m,t_m)) \in Q$ since $h_i(x_i,t_i) \in \im f_i\restfirst{x_i}$ for all $i$.

\Cref{lem:all-box} (applied to the second coordinate) shows that either $(h_1\restsecond{y_1},\dots,h_m\restsecond{y_m})$ is of dictator type for some $y \in P$, or $(h_1,\dots,h_m)$ is $\Phi$-box-trivial (we assumed that there is no unit witness obstruction).
We would like to rule out the possibility that $(h_1,\dots,h_m)$ is $\Phi$-box-trivial.

First, we observe that $\im h_i\restfirst{\sigma} = \im f_i\restfirst{\sigma}$ for all $i,\sigma$. Indeed, if $f_i\restfirst{\sigma}$ is constant then $h_i\restfirst{\sigma}$ is equal to the same constant, and otherwise $\im h_i \restfirst{\sigma} = \im f_i \restfirst{\sigma} = \Sigma_i$. Thus $\im h_i = \im f_i$ for all $i$, and so if $(h_1,\dots,h_m)$ were of box type then so would $(f_1,\dots,f_m)$ be, contrary to assumption.

Second, $f_i\restfirst{\tau_i}$ is a permutation for all $i$, and so $h_i(\tau_i,b) = b$, showing that all $h_i$ depend on the second coordinate.
Since $(f_1\restfirst{\sigma_1},\dots,f_m\restfirst{\sigma_m})$ is of box type and $Q \neq \Sigma_1 \times \cdots \times \Sigma_m$, there must exist $i$ such that $\im f_i\restfirst{\sigma_i} \neq \Sigma_i$, and so $f_i\restfirst{\sigma_i}$ is equal to some constant $c_i$. Since $h_i(\tau_i,\bar{c}_i) = \bar{c}_i$ while $h_i(\sigma_i,\bar{c}_i) = c_i$, we see that $h_i$ depends on both coordinates, and so $(h_1,\dots,h_m)$ cannot be of dictator type.

We conclude that $(h_1\restsecond{y_1},\dots,h_m\restsecond{y_m})$ is of dictator type for some $y \in P$.
As in the proof of \Cref{lem:some-non-boolean}, this implies that if $h_i\restfirst{a}$ is a permutation then $a = \tau_i$. Indeed, by construction, if $h_i\restfirst{a}$ is a permutation then it must be the identity. Since $h_i(a,y_i) = h_i(\tau_i,y_i) = y_i$ and $h_i\restsecond{y_i}$ is a permutation, we see that $a = \tau_i$. Thus $h_i\restfirst{\bar{\tau}_i}$ is constant.

Since $h_i(\tau_i,y_i) = y_i$ and $h_i\restsecond{y_i}$ is a permutation, we see that $h_i(\bar{\tau}_i,y_i) = \bar{y}_i$. This shows that
\[
 h_i(a,b) =
 \begin{cases}
     b & \text{if } a = \tau_i, \\
     \bar{y}_i & \text{if } a = \bar{\tau}_i.
 \end{cases}
\]

Suppose that $z \in P$, where $z \neq (\tau_1,\dots,\tau_m)$. Since $(h_1,\dots,h_m)$ is a polymorphism, so is $(h_1\restfirst{z_1},\dots,h_m\restfirst{z_m})$. By $\Phi$-box-triviality for $n = 1$, it is either of dictator type or of box type. It cannot be of dictator type since one of its coordinates is constant, hence it must be of box type.
We conclude that if $z \in P$ satisfies $z \neq (\tau_1,\dots,\tau_m)$ and $w$ is such that $z_i = \bar{\tau}_i \to w_i = \bar{y}_i$ for all $i$, then $w \in Q$.

\paragraph{Proving the generalized upset property}
Let $x \in Q$ be different from $y$, say $x_{i_0} \neq y_{i_0}$. We need to show that $Q$ contains all vectors obtained from $x$ by changing coordinates to their value in $\bar{y}$.

We do this by considering the polymorphism $(\eta_1,\dots,\eta_m)$ defined by
\[
 \eta_i(\tau_i) = x_i, \quad \eta_i(\bar{\tau}_i) = \bar{y}_i.
\]
This is a polymorphism since $(\eta_1(\tau_1),\dots,\eta_m(\tau_m)) = x \in Q$, and if $z \in P$ is such that $z \neq (\tau_1,\dots,\tau_m)$ then $w = (\eta_1(z_1),\dots,\eta_m(z_m))$ satisfies the property $z_i = \bar{\tau}_i \to w_i = \bar{y}_i$ for all $i$, and so $w \in Q$.

By $\Phi$-box-triviality for $n = 1$, we see that $(\eta_1,\dots,\eta_m)$ is either of dictator type or of box type. Since $x_{i_0} = \bar{y}_{i_0}$, the function $\eta_{i_0}$ is constant, and so $(\eta_1,\dots,\eta_m)$ must be of box type. This implies the generalized upset property since $\im \eta_i = \{x_i, \bar{y}_i\}$ for all $i$.
\end{proof}

\bibliographystyle{alpha}
\bibliography{biblio}

\end{document}